\def\MT@register@subst@font{\MT@exp@one@n\MT@in@clist\font@name\MT@font@list
 \ifMT@inlist@\else\xdef\MT@font@list{\MT@font@list\font@name,}\fi}
\newtcolorbox{review}{colback=blue!5!white,colframe=blue!75!black}
\newcommand{\zerodisplayskips}{%
  \setlength{\abovedisplayskip}{1.5pt}%
  \setlength{\belowdisplayskip}{1.5pt}%
  \setlength{\abovedisplayshortskip}{1.5pt}%
  \setlength{\belowdisplayshortskip}{1.5pt}}
\appto{\normalsize}{\zerodisplayskips}
\appto{\small}{\zerodisplayskips}
\appto{\footnotesize}{\zerodisplayskips}
\let\bigland=\bigwedge
\let\biglor=\bigvee
\let\isom=\cong
\newcommand\amp{\ensuremath{\ \&\ }}
\newcommand{\Luk}{\text{\textsl{\L}}}
\let\phi=\varphi
\newcommand{\CM}{\mathcal{M}}
\newcommand{\CL}{\mathcal{L}}
\newcommand{\CP}{\mathcal{P}}
\theoremstyle{plain}
\newtheorem{theorem}{Theorem}[section]
\newtheorem{lemma}[theorem]{Lemma}
\newtheorem{corollary}[theorem]{Corollary}
\theoremstyle{definition}
\newtheorem{definition}[theorem]{Definition}
\newtheorem{example}[theorem]{Example}
\theoremstyle{remark}
\newtheorem{remarks}[theorem]{Remarks}
\begin{document}
\title[Many-valued Homomorphism Preservation]{Homomorphism Preservation Theorems for Many-valued Structures}

\author{James Carr}
\address{University of Queensland}

\begin{abstract}
A canonical result in model theory is the homomorphism preservation theorem (h.p.t.)\ which states that a first-order formula is preserved under homomorphisms on all structures if and only if it is equivalent to an existential-positive formula, standardly proved via a compactness argument. Rossman (2008) established that the h.p.t.\ remains valid when restricted to finite structures. This is a significant result in the field of finite model theory. It stands in contrast to the other preservation theorems proved via compactness where the failure of the latter also results in the failure of the former~\cite{AjtaiGurevich87},~\cite{Tait59}. Moreover, almost all results from traditional model theory that do survive to the finite are those whose proofs work just as well when considering finite structures. Rossman's result is interesting as an example of a result which remains true in the finite but whose proof uses entirely different methods. It is also of importance to the field of constraint satisfaction due to the equivalence of existential-positive formulas and unions of conjunctive queries~\cite{ChandraMerlin77}. Adjacently, Dellunde and Vidal (2019) established a version of the h.p.t.\ holds for a collection of first-order many-valued logics, namely those whose structures (finite and infinite) are defined over a fixed finite MTL-chain.   

In this paper we unite these two strands. We show how one can extend Rossman's proof of a finite h.p.t.\ to a very wide collection of many-valued predicate logics. In doing so, we establish a finite variant to Dellunde and Vidal's result, one which not only applies to structures defined over algebras more general than MTL-chains but also where we allow for those algebra to vary between models. We identify the fairly minimal critical features of classical logic that enable Rossman's proof from a model-theoretic point of view, and demonstrate how any non-classical logic satisfying them will inherit an appropriate finite h.p.t. This investigation provides a starting point in a wider development of finite model theory for many-valued logics and, just as the classical finite h.p.t.\ has implications for constraint satisfaction, the many-valued finite h.p.t.\ has implications for valued constraint satisfaction problems. 
\end{abstract}

\maketitle



\section{Introduction}\label{Intro}
Preservation theorems are a group of results from model theory that describe relationships between syntactic and semantic properties of first-order formulas. Three fundamental preservation theorems dating from the 1950s are the \L os--Tarski theorem~\cite[Theorem 5.4.4]{Hodges97}, Lyndon's positivity theorem~\cite[Corollary 5.3]{Lyndon59} and the homomorphism preservation theorem (h.p.t.)~\cite[Theorem 1.3]{Rossman08}. Each states that a certain syntactic class of formulas contains (up to logical equivalence) all first-order formulas preserved under a particular kind of homomorphism, and each is standardly proved via compactness. Their place in traditional classical model theory naturally motivated an interest in their status in other model-theoretic contexts. Two in particular are finite model theory and many-valued logics. 

It was known for some time that many famous theorems about first-order logic fail when restricted to finite structures, including the bedrock compactness theorem~\cite[Chapter 0]{EbbinghausFlum95}. This failure suggested that theorems which utilised compactness in their proofs might also fail; indeed both the \L os--Tarski~\cite{Tait59} and Lyndon positivity theorem~\cite{AjtaiGurevich87, Gurevich84} fail in the finite. By contrast, Rossman (2008) established that the h.p.t.\ remains valid when restricted to finite structures. This result is an interesting example in the world of finite model theory, frequently the results and techniques from traditional model theory that do survive to the finite (such as Ehrenfeucht--Fra\"iss\'e games~\cite[Chapter 1]{EbbinghausFlum95}) are those whose proofs work just as well when considering only finite structures. With Rossman's result we have a theorem which remains true in the finite but whose proof uses entirely different methods.\footnote{Notably, Rossman's proof strategy can be used retroactively in the infinite setting. That is, one way to understand why the h.p.t.\ remains true in the finite is that whilst the h.p.t.\ is usually proved via compactness there are compactness free proof strategies which the finite setting brings to light. This is not simply a curiosity, it has substantive consequences. The compactness-free strategy provides a refinement of the standard h.p.t.\ to an 'equirank version' which Rossman also proves~\cite[Theorem 4.12]{Rossman08}. We will pick this up again in section ~\ref{ResiCon}.}\

A separate stand is the status of preservation theorems in non-classical logics - especially many-valued ones. Dellunde and Vidal (2019) established a version of the h.p.t.\ that holds for structures defined over a fixed finite MTL-chain. MTL-algebras provide the algebraic semantics for the monoidal t-norm logic MTL, a basic propositional fuzzy logic that encompasses the most well-studied fuzzy logics including H\'{a}jek's basic logic BL, G\"{o}del--Dummett logic G and \L ukasiewicz logic \L\ ~\cite[Chapter 1, Section 2]{Handbook}. MTL-chains are the finitely subdirectly irreducible MTL-algebras~\cite{Noguera:Thesis} and so provide a natural restricted class to work over that is better behaved~\cite{DellundeVidal19}. Accordingly, Dellunde and Vidal's result applies to a significant collection of first-order many-valued logics. The choice to work with a finite MTL-chain is far from arbitrary, rather it reflects a common pattern in research into non-classical logic:\begin{enumerate}
    \item Identify a potential theorem about classical logic to be generalised to non-classical settings, adjusting concepts involved if needed.
    \item Attempt a proof in a non-classical logic as weak as possible whilst being able to formulate the theorem in question.
    \item Strengthen the logic you work with, bringing it closer to classical logic, in accordance with problems arising in attempted proofs.
    \item Ideally, demonstrate that the aspects of classical logic adopted were necessary through counterexamples. 
\end{enumerate}
Rather than a strict ordering these steps feed into each other. For example, the setting of a proof will affect how classical concepts are appropriately generalised or exactly which generalised concepts one decides to work with affects what aspects of classical logic may be relevant. In a model-theoretic concept this process plays out especially with regard to the algebra that models are defined over. When it comes to Dellunde and Vidal's work, the essential components are a form of compactness theorem and a condition on models called witnessing; the importance of these properties in the essential proofs are what motivates the finite MTL-chain setting. 

Our investigation picks up at the meeting point of these two strands. We will demonstrate how one can extend Rossman's proof of a finite h.p.t.\ to a very wide collection of many-valued predicate logics. In doing so we establish a finite variant to Dellunde and Vidal's result, a variant which not only applies to structures defined over more general algebras than MTL-chains, but also where we allow for the algebra our structures are defined over to vary. The framework we choose to work in is deliberately chosen to be somewhat artificial in order to highlight the relevant behaviour that is necessary for the proof of the main theorem. It applies directly to a few different classes of algebras and corresponding models discussed in the literature, namely certain classes of residuated lattices of which MTL-chains are an example and the linearly-ordered integral abelian monoids which appear in discussions around valued constraint satisfaction problems ~\cite{HorcikMoraschiniVidal17}. The paper proceeds as follows. In Section~\ref{Prelim} we properly introduce our many-valued models including both the class of algebras we define our models over and the models themselves. We'll also introduce the appropriate generalisations of the essential model-theoretic concepts we are interested in. In Section~\ref{B&FSystem} we turn to an important piece of background theory - back-\&-forth systems. These have already been investigated in a many-valued context~\cite{DellundeGarciaNoguera18} and we recall this work whilst making small adaptations fit for our context. In Section~\ref{EPSent} we introduce existential-positive sentences in the many-valued context, noting that in their generalisation we see a splitting of one syntactic class into a collection of interrelated classes. Accordingly, we identify the appropriate generalisation of the h.p.t.\ in our many-valued context by proving the straightforward direction of the theorem, before turning to the proof of the main theorem in Section~\ref{FinHPT}. Finally, in Section~\ref{ResiCon} we consider variants to the main theorem relating to an alternative notion of conjunction and which ties our investigation directly to the work of Dellunde and Vidal. This also lets us say something about the potential status of homomorphism preservation theorems in the related topic of semiring semantics. We conclude with some remarks about further study with a particular eye towards other plausible preservation theorems.


\section{Related Work}
Our work sits firmly within the wider project of developing finite model theory for many-valued logics begun in recent years. In the same fixed and finite MTL-chain setting as Dellunde and Vidal's work there have been investigations into the status of other preservation theorems. These have been quite successful, including a proof of a generalised \L os--Tarski theorem and a Chang--\L os--Suszko theorem which links $\forall_2$ formulas to preservation under unions of chains~\cite{BadiaCostaDellundeNoguera19}. Much of this work is built on the generalisation of fundamental concepts of classical model theory to the non-classic setting. This includes understanding the different types of maps one can define between many-valued models~\cite{DellundeGarciaNoguera16}, the method of diagrams~\cite{Dellunde12}, back-and-forth systems~\cite{DellundeGarciaNoguera18} and L\"{o}wenheim--Skolem theorems~\cite{DellundeGarciaNoguera16}. Other themes explored in the literature include developing appropriate 0-1 law equivalents in the many-valued setting~\cite{BadiaNoguera22}.

Directly adjacent to our main interest in homomorphism preservation theorems in finite models is the field of constraint satisfaction problems. The classical constraint satisfaction problem (CSP) is closely connected with classical finite-model theory as finite databases are naturally represented by classical finite models in relational signatures. Moreover, there is an equivalence between existential-positive formulas and unions of conjunctive queries~\cite{ChandraMerlin77}, the syntactic class and query class that the h.p.t.\ and CSP are respectively concerned with. This means Rossman's result is of direct consequence for the CSP field~\cite{Rossman08}. Similarly, Dellunde and Vidal's investigation is further motivated by the application of models defined over MTL-algebras to \textit{valued} constraint satisfaction problems (VCSP). A generalisation of classical CSP, in a valued constraint satisfaction problem the constraints are assigned some form of weighting which is optimised in the solution. This has been effectively modelled by taking the weights as elements of an algebra and utilising the algebraic operations to interpret their combination in a potential solution~\cite{BistarelliMontanariRossi97}. 

Another literature adjacent to (finite) model theory for many-valued logics is the literature on semiring semantics and its applications in database theory ~\cite{GreenTannen17}. Within this literature the basic objects of study are defined in the same way as in the predicate many-valued logic literature, however there the relationship between semantics and syntax in focus is different. In particular, in the setting of semirings the quantifiers $\forall$ and $\exists$ are treated differently providing an alternative generalisation of the classical notions. The treatments do overlap in the case of lattice semirings and our consequently our investigations is relevant for models defined over these semirings. Much of the core theory of classical model theory has been explored in the context of semiring semantics including Ehrenfeucht-Fra\"{i}ss\`{e} games ~\cite{BrinkeGradelMrkonjic23}, 0-1 laws ~\cite{GradelHelalNaafWilke22} and locality ~\cite{BiziereGradelNaaf23}. The most pertinent to our investigation is the topic of conjunctive queries over semiring annotated databases ~\cite{Green11}.


\section{Preliminaries}\label{Prelim}
We start by introducing the structures we are concerned with. These will consist of three components, a non-empty set equipped with interpretations of non-logical symbols akin to the familiar classical model; an algebra that defines the truth-values of formulas and a subset of the algebra that determines which truth-values are considered `true' or `designated'. As we will see, the choice of algebras we define our structures over is intimately tied with the resulting properties of the structures. The class of algebras we work with have a fairly artificial definition; the intention is they provide a level of generality that can be applied cleanly to a multitude of cases. In practice different motivations lead one to be concerned with more specific algebras than what we introduce here.

\begin{definition}\label{def:interplat}
    An \emph{interpreting lattice} is a pair $(A,F)$ where $A$ is an algebra in signature $\CL$ with $\{\lor,\land\}\subseteq \CL$ and $F\subseteq A$ such that:\begin{itemize}
        \item $\langle A,\land,\lor\rangle$ is a lattice;
        \item $\forall a,b\in A$ $a\land b\in F$ iff $a\in F$ and $b\in F$;
        \item $\forall a,b\in A$ $a\lor b\in F$ iff $a\in F$ or $b\in F$. 
    \end{itemize}
\end{definition}
\begin{remarks}
    When we introduce our first-order semantics the role of the algebra $A$ will be to provide the meaning of truth-values and the logical connectives $\circ\in\CL$, whilst the role of the subset $F$ is to give meaning to the $\models$ relation. The two conditions on the subset $F$ are equivalent to requiring that $F$ is a prime filter of $A$. Later we will see that this is an essential restriction (example ~\ref{primerequirement}).

    Further, observe that whilst we require the presence of the connectives $\lor$ and $\land$ these need not exhaust the algebraic signature.
\end{remarks}

\begin{example}\label{3maincases}
    This setting encompasses a number of the algebras that many-valued models have been investigated over in the literature. Naturally, the two element Boolean algebra $\mathbf{2}$ which is in signature $(\lor,\land,\neg,\bot,\top)$ is an interpreting lattice taking $F=\{\top\}$. Otherwise we can place our examples into three broad groups.
    
    The starting motivation of this investigation comes from the literature on predicate fuzzy logics and within that literature a natural example of interpreting lattices emerges from bounded, prelinear, residuated lattices, also known as UL-algebreas. UL-algebras, which have signature $(\lor,\land,\&,\rightarrow,0,1)$ are the algebraic semantics of the (propositional) uninorm logic and are discussed in the context of many-valued models in~\cite{DellundeGarciaNoguera18}. The usual understanding of truth is to take $F$ as the upset (in the lattice order) of the unit $1$ of the residuated conjunction $\&$, i.e. $F=\{a\in A:a\geq 1\}$. Subdirectly irreducible UL-algebras are linearly ordered, (and referred to as UL-chains), and attention is often restricted to models defined over UL-chains rather than all UL-algebras~\cite{DellundeGarciaNoguera18}. Our setting follows this restriction, paired with the aforementioned filter $F$, every UL-chain is an interpreting lattice. The previously mentioned MTL-algebras are the algebraic semantics for the monoidal t-norm logic and a homomorphism preservation theorem for (possibly infinite) models defined over a \textit{fixed} and \textit{finite} MTL-\textit{chain} was established in~\cite{DellundeVidal19}. They are the integral restrictions of UL-algebras meaning that the unit of residuated conjunction is also the top element of the lattice. MTL-chains themselves encompass the standard algebras of the three core fuzzy logics, \L ukasiewicz logic, G\"{o}del-Dummet logic and product logic. These all take the real unit interval $[0,1]$ as their domain, interpret the lattice connecives $\lor,\land$ as expected and the residuated conjunction $\&$ as $a\odot b=max\{0,a+b-1\}$, $\land$ and real multiplication $\cdot$ respectively. The implication connective $\rightarrow$ is the residuum of the corresponding residuated conjunction $a\rightarrow b=max\{c\in[0,1]:a\cdot c\leq b\}$, and each is an interpreting lattices under the prime filter $F=\{1\}$:\[\Luk=\langle [0,1],\lor,\land,\odot,\rightarrow,0,1\rangle\;\; G=\langle [0,1],\lor,\land,\land,\rightarrow,0,1\rangle\;\; P=\langle [0,1],\lor,\land,\cdot,\rightarrow,0,1\rangle.\]
    
    MTL-algebras are one of the potential candidates for interpreting valued constraint satisfaction problems. Another class of algebras discussed in the VCSP context are linearly-ordered integral abelian monoids ($l$-monoids)~\cite{HorcikMoraschiniVidal17}. These are in signature $(\lor,\land,\cdot,0,1)$ are also covered by interpreting lattices, again taking $F$ as the upset of the unit $1$ of the monoid operator $\cdot$. Note that the three standard algebras on the unit interval defined above are also linearly-ordered integral abelian monoids when we forget about the implication connective $\rightarrow$. The most general approach towards the study of VCSP complexity takes c-semirings as their evaluating algebras~\cite{BistarelliMontanariRossi97}. These are in the signature $(+,\times,0,1)$ and are also integral in the above sense, but are not in general interpreting lattices. They do not have an operator in the signature corresponding to the lattice infimum, instead their notion of conjunction is just the monoid operate $\times$. Due to their integrality however there is something we can say, and we return to consider the integral setting in Section~\ref{ResiCon}.

    The application of semirings as a generalisation to classical model theory is a significant branch of literature in its own right. Within that literature there is less focus on understanding a form of the $\models$ relation and designating certain values of the algebra as true as the subset $F$ is intended to do ~\cite{GreenTannen17}. Three important semirings, the \L ukasiewicz semiring, min-max semiring and tropical semiring, are the $\lor,\&$ reducts of the aforementioned MTL-chains. As with c-semirings they need not be interpreting lattices even when the underlying natural order is a lattice order as the operator corresponding to conjunction may not reflect the lattice conjunction. Nevertheless, any lattice semiring paired with a prime filter is an interpreting lattice. In particular, the min-max semiring $(S,min,max,s,t)$ where $s$ and $t$ are the least and greatest element respectively of a totally ordered set is an interpreting lattice when paired with any upset.
\end{example}

We define our many-valued structures over interpreting lattices. We start with introducing the syntax which is defined essentially as classically except we allow for any algebraic connectives present in the signature $\CL$. We will only introduce \emph{relational languages} as our main investigation is restricted to these. All the basic definitions in this section extend naturally to a language complete with function symbols in the expected manner.

\begin{definition}
    A (relational) \textit{predicate language} $\CP$ is a is a non-empty set $P$ of \textit{predicate symbols} all of which comes with an assigned natural number that is its \textit{arity}. The predicate symbols of arity zero are called \textit{truth constants}.

    Given an algebraic signature $\CL$ such that $\{\lor,\land\}\subseteq \CL$ and a countable set of variables $Var$, We define the set of $\CL,\CP$-\textit{formulas}, denoted $Fm(\CL,\CP)$ or simply $Fm$, inductively as in classical logic. That is the set of $\CL,\CP$-formulas is the minimum set $X$ such that:\begin{itemize}
        \item $X$ contains all expressions of the form $P(x_1,...,x_n)$ where $P$ is an $n$-ary predicate symbol and $x_1,...,x_n$ are variables. These are the \textit{atomic} $\CL,\CP$-formulas
        \item $X$ is closed under the propositional connectives of $\CL$ (in particular it contains all the constant symbols of $\CL$).
        \item If $\phi\in X$ and $x\in Var$ then $\forall x\phi$ and $\exists x\phi\in X$. 
    \end{itemize}

    We define the notions of free occurrence of a variable, open formula, substitutability and sentence as in classical model theory.
\end{definition}

\begin{definition}
    Let $\CP$ be a predicate language and $\CL$ an algebraic signature. We define a $\CL,\CP$-structure $\CM$ as a triple $(A,F,M)$ where $(A,F)$ is an interpreting lattice in signature $\CL$ and $M$ is a pair $(M,\langle P^M\rangle_{P\in\CP})$ where:\begin{itemize}
        \item $M$ is a non-empty set,
        \item $P^M:M^n\rightarrow A$ for each $P\in\CP$ ($P^M\in A$ when $n=0$). 
    \end{itemize}
    The set $M$ is called the \textit{domain} of $\CM$ and the mappings $P^M$ are called the interpretations of predicate symbols in $\CM$.
\end{definition}
\begin{remarks}
    In the definition of structure we see an important difference between a algebraic constant $d\in\CL$ and an interpreted nullary predicate symbol for $P\in\CP$. The interpretation of the former is determined by the interpreting lattice $(A,F)$ and takes the same truth value $d\in A$ for each structure $(A,F,M)$. The latter does not come with a fixed interpretation on the algebra, each structure assigns it some truth value $P^M\in A$.
\end{remarks}

\begin{definition}
    As in classical logic, we define an $\CM$-valuation of the object variables is a mapping $v$ from the set of object variables $Var$ into $M$. Given an $M$-valuation $v$, an object variable $x$ and an element $m\in M$ we denote by $v_{x=m}$ the $M$-valuation defined as:\[v_{x=m}:=\begin{cases}
        m & \text{if }y=x\\
        v(y) & \text{otherwise}
    \end{cases}\]
    Let $\CM$ be a $\CL,\CP$-structure. We define the \textit{truth values} $||-||^M_v$ of the $\CL,\CP$-formulas for a given $M$-valuation $v$ recursively as follows:\begin{align*}
        &||P(x_1,..,x_n)||^M_v =P^M(v(x_1),...,||v(x_n)),\text{ for each }n\text{-ary }P\in \CP,\\
        &||\circ(\phi_1,...,\phi_n)||^M_v =\circ^A(||\phi_1||^M_v,...,||\phi_n||^M_v),\text{ for each }n\text{-ary }\circ\in\CL,\\
        &||\forall x\phi||^M_v =inf_{\leq}\{||\phi||^M_{v_{x=a}}:a\in M\},\\
        &||\exists x\phi||^M_v =sup_{\leq}\{||\phi||^M_{v_{x=a}}:a\in M\}.
    \end{align*}
    If the infimum or supremum does not exist we take the truth-value of such a formula to be undefined. We say that a $\CP$-structure is \textit{safe} iff $||\phi||^M_v$ is defined for every formula and every valuation, and reserve the term $\CP$-model for safe $\CP$-structures. We will restrict attention to $\CP$-models.

    We denote by $||\phi||^M=a$ for $a\in A$ that $||\phi||^M_v=a$ for all $M$-valuations $v$.
\end{definition}
\begin{remarks}\label{quantifierremark}
    The interpretation of the quantifiers $\exists$ and $\forall$ deserves some close attention. Defined relative to interpreting lattices, the quantifiers capture both the idea of a supremum/infinmum relative to the natural order on the algebra and act as generalised versions of the algebraic connectives $\lor$ and $\land$. These are really two distinct ways of approaching quantifiers for models defined over algebras. The former is the route taken in the literature on predicate many-valued logics and remains meaningful even for algebras in signatures without a form of disjunction/conjunction but a sensible notion of order, e.g. the purely implication signatures of the weakly implicative logics ~\cite[Chapter 7]{CintulaNoguera21}. The latter is the route taken in the literature on semiring semantics and in non-lattice semirings the quantifiers have a vastly different behaviour with respect to the underlying order on the semiring ~\cite{GreenTannen17}. 
\end{remarks}

\begin{example}\label{exa:struc}
    We borrow an example from ~\cite[Example 20]{HorcikMoraschiniVidal17} to illustrate the form of these structures and which provides a VCSP formulation of the $\langle s,t\rangle$-MIN-CUT problem. We take $\CL=\{\land,\lor,0,1\}$ and $(A,F)$ the product interpreting lattice $P$ over the real unit interval (without implication) discussed in examples ~\ref{3maincases}. We take $\CP=\{R,P_s,P_t\}$ as the language of weighted (s,t)-cut graphs, $R$ is binary relation symbol and $P_s,P_t$ are unary relation symbols. We define the $\CL,\CP$-model $B_\alpha=\langle\{a,b\},R,P_s,P_t\rangle$ where $P^B_s(a)=1,P^B_s(b)=0,P^B_t(a)=0, P^B_t(b)=1$ and $R^B$ is defined for $0<\alpha<1$ by:\[\begin{tikzcd}
a \arrow["1"', loop, distance=2em, in=215, out=145] \arrow[r, "\alpha", bend left] & b \arrow[l, "1", bend left] \arrow["1"', loop, distance=2em, in=35, out=325]
\end{tikzcd}\]
\end{example}

The valuation function $||-||$ is the fundamental relationship between semantics and syntax enabled by these objects. One can certainly study many-valued models entirely from this point of view, indeed this is the approach taken in the semiring semantic literature (e.g. ~\cite{BrinkeGradelMrkonjic23,BiziereGradelNaaf23}. However, there is also motivation to introduce a notion of validity and logical consequence based on designated truth-values, this is the usual approach in the literature on predicate fuzzy logics (e.g. ~\cite{BadiaCostaDellundeNoguera19,DellundeVidal19}. With a definition of validity in place many of the standard concepts from model theory can be defined just as they are in classically.  e.g.\ the theory of a model is still just all the sentences that it models $Th(A,F,M):=\{\phi\in Fm(\CL,\CP):(A,F,M)\models\phi\}$. Others require adjustments and generalisations.
\begin{definition}
    Given a set of sentences $\Phi$ we say that $\CM$ is a \textit{model of} $\Phi$, denoted $(A,F,M)\models\Phi$, iff for every $\phi\in\Phi$ $||\phi||^M\in F$.
    
    We say that two $\CL,\CP$-models $(A,F,M),(B,G,N)$ are \textit{elementarily equivalent}, denoted $(A,F,M)\equiv (B,G,N)$ iff $Th((A,F,M))=Th(B,G,N)$. 

    We say that two $\CP$-models $M,N$ defined over the same interpreting lattice $A$ are \textit{strongly elementarily equivalent}, denoted $M\equiv^s N$ iff for all $\phi\in Fm$ $||\phi||^M=||\phi||^N$.
\end{definition}

We can only sensibly compare models defined over interpreting lattices in the same signature $\CL$. Additionally, later we will want to make further restrictions on the algebras we define models over to compare and contrast the resulting model theoretic behaviour. It thus proves convenient to consider models defined over specified classes of algebras. 

\begin{definition}
    Let $\Phi$ be a $\CP$ theory and $K$ a class of interpreting lattices in the same signature $\CL$. We define the following sets of $\CP$-models.\begin{align*}
    Mod^K(\Phi)&:=\{(A,F,M):(A,F)\in K,M\text{ a safe structure over }A\text{ and }(A,F,M)\models\Phi\}.\\
    Mod^K_{fin(\Phi)}&:=\{(A,F,M):(A,F)\in K,M\text{ a safe finite structure over }A\text{ and }(A,F,M)\models\Phi\}.
\end{align*}
When $K=\{(A,F)\}$ we denote this $Mod^{A,F}(\Phi)$ and $Mod^{A,F}_{fin}(\Phi)$. When $K$ is the entire class of interpreting lattices (in a given signature $\CL$) we omit reference to it. 
\end{definition}

When we consider maps between structures we see a more radical departure from the classical case with our previous single definition of morphism shattering into a number of related notions. The landscape of maps between many-valued models is discussed in both~\cite{DellundeGarciaNoguera16} and~\cite{DellundeGarciaNoguera18} along with some initial outlining of the relationships between the different maps and what motivates them. We highlight four types of map (along with some additional properties they may have). First and foremost are protomorphisms which we introduce here and are motivated by our current investigations. 
\begin{definition}\label{protomorph}
    Let $(A,F,M),(B,G,N)$ be $\CP$-models. A map $g\colon M\rightarrow N$ is a \textit{protomorphism} from $(A,F,M)$ to $(B,G,N)$ iff:\begin{align*}
        &\text{for every }R\in \CP\text{ and }\bar{m}\in M\ R^M(\bar{m})\in F\text{ implies }R^N(g(\bar{m}))\in F.
    \end{align*}
\end{definition}

We briefly mention three other kinds of maps that will be relevant to our investigation. Two of them, homomorphisms and strong homomorphisms are pre-existing in the literature~\cite{DellundeGarciaNoguera16}. Monomorphisms are the natural one-directional weakening of strong homomorphisms.
\begin{definition}\label{othermorph}
    Let $f\colon A\rightarrow B$ and $g\colon M\rightarrow N$ be maps. We call the pair $(f,g)\colon (A,F,M)\rightarrow (B,G,M)$ a \textit{homomorphism} from $(A,F,M)$ to $(B,G,N)$ iff:\begin{itemize}
        \item $f$ is a $\CL$-homomorphism, i.e.\ it preserves the algebraic connectives of $A$. 
        \item $g$ is a protomorphism from $(A,F,M)$ to $(B,G,N)$. 
    \end{itemize}

    Let $(f,g)\colon (A,F,M)\rightarrow (B,G,N)$ be a homomorphism. We say that $(f,g)$ is a:\begin{itemize}
        \item \emph{monomorphism} iff for every  $R\in\CP$ and $\bar{m}\in M\;f(R^M(\bar{m}))\leq R^N(g(\bar{m}))$.
        \item \textit{strong homomorphism} iff for every  $R\in \CP$ and $\bar{m}\in M\;f(R^M(\bar{m}))=R^N(g(\bar{m}))$.
    \end{itemize}
\end{definition}
\begin{remarks}
    In the classical case a number of these concepts collapse together. Working over a fixed algebra means that any protomorphism is a homomorphsim when paired with the identity map. In fact, because we only have two elements in the algebra the identity the only available homomorphism of algebras and homomorphisms and monomorphisms coincide.  
    
    Homomorphisms and monomorphisms are the most direct attempt to generalise the classical notion of homomorphism, the former caring about the preservation of true relations and the latter caring about the the valuation of relations directly. Strong homomorphisms, first introduced in~\cite{NolaGerla86}, come from a categorical lens. They preserve the structure more rigidly demanding equality of all relations rather than simply preserving truth. Protomorphisms are a weakening of homomorphisms. They emerge from the observation that there really is no interaction between the algebra and structure map in the definition of homomorphism - we simply require the algebra map to be present. This naturally suggests considering the weaker notion where we do not require its presence, especially important when considering models defined over algebras $A$ and $B$ when $Hom(A,B)=\varnothing$. We will comment on the nature of preservation theorems for both homomorphisms and monomorphisms in our concluding remarks ~\ref{Conclusion}.
\end{remarks}

\begin{definition}
    The existence of a given map between $\CP$-models is a relation on the class of $\CP$-models and denoted as follows:\begin{align*}
        (A,F,M)\rightarrow_p(B,G,N)&: \text{protomorphism}\\
        (A,F,M)\rightarrow (B,G,N)&: \text{homomorphism}\\
        (A,F,M)\rightarrow_m(B,G,N)&: \text{monomorphism}\\
        (A,F,M)\rightarrow_s(B,G,N)&:\text{strong homomorphism}
    \end{align*}
    We use $\leftrightarrows_{x}$ to denote the existence of a map of the corresponding type between $\CP$-models in both directions.
\end{definition}

\section{Back-and-forth systems}\label{B&FSystem}
Before we begin our investigation into the finite homomorphism preservation theorem for our many-valued models we first need to recover an important model-theoretic tool - back-and-forth systems. Back-and-forth systems have already been covered in a many-valued setting in~\cite{DellundeGarciaNoguera18} specifically for models defined over UL-algebras. An effectively immediate generalisation of these proofs apply to models defined over interpreting lattices. We include the details here for direct reference.
\begin{definition}
    Let $(A,F,M),(B,G,N)$ be $\CP$-models. We say a partial mapping $(p,r)\colon(A,M)\rightarrow (B,G,N)$ is a \textit{partial isomorphism} from $(A,F,M)$ to $(B,G,N)$ iff:\begin{enumerate}
        \item $p$ and $r$ are injective.
        \item for every $\circ\in\CL$ and $\bar{a}\in A$ such that $\bar{a}\in dom(p)$ and $\circ(\bar{a})\in dom(p)$ \[p(\circ^A(\bar{a}))=\circ^B(p(\bar{a})).\]
        \item for every $R\in\CP$ and $\bar{m}\in M:\bar{m}\in dom(r)$ \[p(R^M(\bar{m}))=R^N(r(\bar{m})).\]
    \end{enumerate}
    $(A,F,M)$ and $(B,G,N)$ are said to be \textit{finitely isomorphic}, denoted $(A,F,M)\isom_f(B,G,N)$ iff there is a sequence $\langle I_n:n\in \omega\rangle$ with the following properties:\begin{enumerate}
        \item Every $I_n$ is a non-empty set of partial isomorphisms from $(A,F,M)$ to $(B,G,N)$ and $I_{n+1}\subseteq I_n$.
        \item Forth R: for every $(p,r)\in I_{n+1}$ and $m\in M$ $\exists (p,r')\in I_n:r\subseteq r'$ and $m\in dom(r')$.
        \item Back R: for every $(p,r)\in I_{n+1}$ and $n\in N$ $\exists (p,r')\in I_n:r\subseteq r'$ and $n\in im(r')$.
        \item Forth L: for every $(p,r)\in I_{n+1}$ and $a\in A$ $\exists (p',r)\in I_n:p\subseteq p'$ and $a\in dom(p')$.
        \item Back L: for every $(p,r)\in I_{n+1}$ and $b\in B$ $\exists (p',r)\in I_n:p\subseteq p'$ and $b\in im(p')$.
    \end{enumerate}
    We say $(A,F,M)$ and $(B,G,N)$ are $n$-finitely isomorphic, denoted $(A,F,M)\isom_n(B,G,N)$ iff there is a sequence $\langle I_m:m\leq n\rangle$ with the preceding properties. 
\end{definition}

We give the main definition of partial isomorphism for an arbitrary predicate language but the results we recover only apply to \emph{relational languages}, that is languages which only contain relation symbols (including those of 0-arity). Accordingly, from here on we restrict our attention to relational languages. 

Alongside this generalised notion of partial isomorphism and back-and-forth system we need a measure to compare with elementary equivalence. In classical logic this is quantifier depth but here we need something finer grained. 
\begin{definition}
    Let $\CP$ be a predicate language. Given a $\CP$-formula $\phi$ we define by induction the \textit{nested rank} of $\phi$, denoted $nr(\phi)$ as follows:\begin{itemize}
        \item If $\phi$ is atomic $nr(\phi)=0$.
        \item For every $\circ\in\CL$ and $\phi_1,...,\phi_k$ $nr(\circ(\phi_1,...,\phi_k))=1+\sum\limits_{i=1}^k nr(\phi_i)$
        \item For any $\CP$-formula $\phi(x)$ $nr(\forall x\phi)=nr(\exists x\phi)=nr(\phi)+3$.
    \end{itemize}
    We say two $\CP$-models $(A,F,M)$ and $(B,G,N)$ are $n$-elementarily equivalent, denoted $(A,F,M)\equiv_n(B,G,N)$ iff they are models of the same $\CP$-sentences up to nested rank $n$.
\end{definition}

Now the proof of~\cite[Theorem 24]{DellundeGarciaNoguera18} yields the corresponding theorem.
\begin{theorem}\label{mixedfintieb&f}
    Let $\CP$ be a finite relational language and $(A,F,M)$, $(B,G,N)$ be $\CP$-models and $n\in\omega$. Suppose $(A,F,M)\isom_n(B,G,N)$. Then $(A,F,M)\equiv_n(B,G,N)$. 
\end{theorem}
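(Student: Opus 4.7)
The plan is to proceed by induction on nested rank, proving the strengthened claim: for every $\CP$-formula $\phi$ with $nr(\phi) \leq k$, every $(p,r) \in I_k$, and every $M$-valuation $v$ sending the free variables of $\phi$ into $\mathrm{dom}(r)$, one can find an extension $(p^*, r^*) \in I_0$ with $p \subseteq p^*$, $r \subseteq r^*$, $||\phi||^M_v \in \mathrm{dom}(p^*)$, and $p^*(||\phi||^M_v) = ||\phi||^N_{r^* \circ v}$. Specialising to sentences and starting from any $(p_0, r_0) \in I_n$ with empty domains then recovers the theorem, with the transfer of designated values through $p^*$ following from its preservation of $\land$ and $\lor$ together with the primality of the filters $F$ and $G$.

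For the atomic case, clause (3) of the partial isomorphism definition gives the result directly. For a propositional connective $\phi = \circ(\phi_1, \ldots, \phi_\ell)$, I would apply the induction hypothesis to each $\phi_i$ in succession, threading the extensions, and then invoke Forth L and Back L once more to amalgamate so that all $||\phi_i||^M_v$ and $||\phi||^M_v$ itself lie in a common domain; clause (2) of the definition then supplies $p^*(\circ^A(\ldots)) = \circ^B(p^*(\ldots))$. For a quantifier $\phi = \forall x \psi$ (with the $\exists$ case dual), I need $p^*$ to carry $\inf_{\leq}\{||\psi||^M_{v_{x=a}} : a \in M\}$ to the analogous infimum on the $N$-side. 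For each $a \in M$, Forth R extends $r$ to include $a$, and the induction hypothesis applied to $\psi$ then gives a matching value on the $N$-side; Back R handles each $b \in N$ dually. A concluding Forth L / Back L step draws both infima themselves into the domain and image of $p^*$.

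The $+3$ per quantifier and $+1$ per connective in the definition of $nr$ are calibrated precisely to fuel this process: each quantifier step consumes one structure-side back-and-forth, one algebra-side back-and-forth, and the recursive evaluation, while each propositional step requires only a single algebra-side amalgamation. The main subtlety I anticipate is in the quantifier case, where one must verify that $p^*$ genuinely carries the infimum over $\{||\psi||^M_{v_{x=a}} : a \in M\}$ to the infimum of the corresponding $N$-side set; this will rest on the order-preservation of $p^*$ (derivable from clause (2) applied to $\land$ and $\lor$), its injectivity, and the element-by-element correspondence built up through the back-and-forth extensions. Careful accounting of which $I_k$ each partial isomorphism inhabits across the inductive steps is the main technical burden; otherwise the argument is a direct adaptation of both the classical Fra\"iss\'e-style back-and-forth proof and the UL-algebra version in~\cite{DellundeGarciaNoguera18}.
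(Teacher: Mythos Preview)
Your inductive strategy on nested rank is precisely what the paper has in mind --- it simply defers to the proof of~\cite[Theorem~24]{DellundeGarciaNoguera18} and asserts that the same argument carries over. One small technical slip: your induction hypothesis should place the extension $(p^*,r^*)$ in $I_{k-nr(\phi)}$ rather than in $I_0$, since otherwise the threading you describe for $\circ(\phi_1,\dots,\phi_\ell)$ breaks --- after handling $\phi_1$ you would be stranded in $I_0$ with no room left to process $\phi_2$. With that repair the rank bookkeeping you sketch goes through.

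The genuine gap is your final step, where you claim that $||\phi||^M\in F\Leftrightarrow p^*(||\phi||^M)\in G$ follows from preservation of $\land,\lor$ together with the primality of $F$ and $G$. It does not: the definition of partial isomorphism never mentions $F$ or $G$, and primality alone supplies no bridge between them via $p^*$. Concretely, take $A=B$ the three-element chain $\{0<1<2\}$ in signature $\{\land,\lor\}$, with $F=\{1,2\}$ and $G=\{2\}$ (both prime), $\CP=\{R\}$ unary, $M=N=\{\ast\}$, and $R^M(\ast)=R^N(\ast)=1$. Then $(id_A,id_M)$ is a partial isomorphism and setting every $I_k=\{(id_A,id_M)\}$ witnesses $(A,F,M)\isom_n(B,G,N)$ for all $n$, yet $(A,F,M)\models\exists x\,R(x)$ while $(B,G,N)\not\models\exists x\,R(x)$, so $\equiv_3$ already fails. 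The source result in~\cite{DellundeGarciaNoguera18} is stated for UL-algebras, where $F=\{a:a\geq 1\}$ is carved out by the algebraic constant $1$ and is therefore automatically respected by any partial isomorphism; for an arbitrary interpreting lattice $F$ is extra data not determined by $\CL$, so the statement as written needs either a restriction to signatures in which $F$ is term-definable or an added filter-preservation clause in the definition of partial isomorphism. Your induction does correctly deliver $p^*(||\phi||^M)=||\phi||^N$; it is only the passage from that to $\equiv_n$ that fails, and the defect sits in the theorem as stated rather than in your method.
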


In the fixed algebra case we may restrict out attention to partial isomorphism where the left component is the identity. This can be convenient as it brings us closer to the classical setting, allowing us to drop back from nested rank to quantifier depth. This motivates the following definitions.\footnote{May is the operative word here. One may still apply the general case and construct partial isomorphisms where the algebra component is not the identity. This requires working with nested rank again.} We fix an interpreting lattice $(A,F)$ and we use $M$ to denote the $\CP$-model $(A,F,M)$.

\begin{definition}\label{fixedb&fsys}
    Let $M,N$ be $\CP$-models. We say a partial mapping $r:M\rightarrow N$ is a \textit{simple partial isomorphism} from $M$ to $N$ iff:\begin{enumerate}
        \item $r$ is injective
        \item for every function symbol $F\in\CP$ and $\bar{m}\in M:\bar{m},F^M(\bar{m})\in dom(r)$\begin{align*}
            r(F^M(\bar{m}))=F^N(r(\bar{m})).
        \end{align*}
        \item for every predicate symbol $R\in \CP$ and $\bar{m}\in M:\bar{m}\in dom(r)$\begin{align*}
            R^M(\bar{m})=R^N(r(\bar{m})).
        \end{align*}
    \end{enumerate}
    $M$ and $N$ are said to be \textit{simply finitely isomorphic}, denoted $M\isom_f N$ iff there is a sequence $\langle I_n:n\in\omega\rangle$ with the following properties:\begin{enumerate}
        \item Every $I_n$ is a non-empty set of partial isomorphisms from $M$ to $N$ and $I_{n+1}\subseteq I_n$
        \item Forth: For every $r\in I_{n+1}$ and $m\in M$ $\exists r'\in I_n:r\subseteq r'$ and $m\in dom(r')$.
        \item Back: For every $r\in I_{n+1}$ and $n\in N$ $\exists r'\in I_n:r\subseteq r'$ and $n\in im(r')$.
    \end{enumerate}
    We say $M$ and $N$ are $n$-finitely isomorphic, denoted $M\isom_nN$ iff there is a sequence $\langle I_m:m\leq n\rangle$ with the preceding properties.
\end{definition}

We then define $n$-elementary equivalence between for models $M$ and $N$ relative to $\CP$-formulas with \textit{quantifier-depth} at most $n$ rather than nested rank at most $n$ (quantifier depth defined in the usual way for classical logic). Following exactly the same proof as~\cite[Theorem 22]{DellundeGarciaNoguera18} yields the corresponding theorem:
\begin{theorem}\label{fixedfinteb&f}
    Let $\CP$ be a finite relational language, $M$, $N$ be $\CP$-models and $n\in\omega$. Suppose $M\isom_n N$. Then $M\equiv^s_n N$. 
\end{theorem}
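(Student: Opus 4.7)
The plan is to prove by induction on the quantifier depth $k$ of a $\CP$-formula $\phi$ the stronger statement: whenever $r \in I_k$ and $v$ is an $M$-valuation whose restriction to the free variables of $\phi$ lands in $dom(r)$, one has $||\phi||^M_v = ||\phi||^N_{v'}$ for any $N$-valuation $v'$ agreeing with $r \circ v$ on those free variables. Applied to a sentence $\phi$ of quantifier depth at most $n$ the precondition is vacuous, and any $r \in I_n$ (which exists since $I_n \neq \varnothing$) witnesses $||\phi||^M = ||\phi||^N$, giving $M \equiv^s_n N$.

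For the base case $k = 0$ I would run a subsidiary induction on quantifier-free formula structure. Atomic formulas are handled directly by clause (3) of Definition~\ref{fixedb&fsys}: $||R(\bar{x})||^M_v = R^M(v(\bar{x})) = R^N(r(v(\bar{x}))) = ||R(\bar{x})||^N_{v'}$. For a compound formula $\circ(\phi_1,\dots,\phi_j)$ with $\circ \in \CL$, since we work over a fixed interpreting lattice $A$ the operation $\circ^A$ is one and the same function whether we evaluate in $M$ or in $N$, so the recursive clause for connectives propagates the equality immediately from the sub-induction hypothesis.

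For the inductive step at depth $k+1$, the connective sub-induction reduces matters to the cases $\phi = \exists x \psi$ and $\phi = \forall x \psi$ with $\psi$ of depth at most $k$. Fix $r \in I_{k+1}$. For $\exists x \psi$ and each $a \in M$, the Forth property supplies $r' \in I_k$ with $r \subseteq r'$ and $a \in dom(r')$; the inductive hypothesis applied to $r'$, $v_{x=a}$, and $v'_{x=r'(a)}$ then yields $||\psi||^M_{v_{x=a}} = ||\psi||^N_{v'_{x=r'(a)}}$. Taking suprema over $a \in M$ gives $||\exists x \psi||^M_v \leq ||\exists x \psi||^N_{v'}$, and the reverse inequality follows symmetrically from the Back property. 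Safety of $M$ and $N$ guarantees both suprema exist, and the $\forall$ case is identical with infima.

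The main obstacle is bookkeeping rather than substantive mathematics --- I have to track which term $I_m$ of the back-and-forth chain is required at each recursive call and verify that extending $r$ to $r'$ preserves the domain precondition on valuations as the induction descends through a quantifier prefix. Because the interpreting lattice is fixed, there is no algebra-side back-and-forth to coordinate with the model-side, so the accounting collapses from the nested-rank scheme of Theorem~\ref{mixedfintieb&f} down to plain quantifier depth, and the argument follows the classical Ehrenfeucht--Fra\"iss\'e template essentially verbatim.
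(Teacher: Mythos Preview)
Your proposal is correct and follows the standard Ehrenfeucht--Fra\"iss\'e induction on quantifier depth, which is precisely what the paper invokes by deferring to \cite[Theorem 22]{DellundeGarciaNoguera18} rather than writing out its own argument. The bookkeeping you flag (using Forth/Back to pass from $I_{k+1}$ to $I_k$ at each quantifier, with the fixed algebra making the connective steps trivial) is exactly the expected template.
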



\section{Existential-Positive Sentences}\label{EPSent}
Now we turn to the actual work of proving finite homomorphism preservation. First we must make more precise the question we are asking. In the classical setting, the syntactic class of formulas we are interested in are the existential-positive formulas defined as formulas built from only the existential quantifier, disjunction and conjunction ($\exists$, $\lor$, $\land$). They are characterised as disjunctions of so called primitive positive sentences ~\cite{Rossman08}, formulas built from only $\exists$ and $\land$ with a normal form:\[\exists\bar{x}\bigland\limits_{i=1}^nR_i(\bar{x_i}),\]
where $\bar{x_i}$ denotes the subset of the free variables $\bar{x}$ that $R$ takes as argument. 

The characterisation relies on the distributivity of $\lor$ and $\exists$ over $\land$ in classical logic. When introducing interpreting lattices we allowed for potentially non-distributive lattices and thus lose the characterisation in general. Accordingly, we define existential-positive sentences directly as those sentences in the canonical form. We also highlight that the conjunction used is the $\land$ connective. 

\begin{definition}\label{elandpsentence}
    Let $\CP$ be a predicate language. We say a $\CP$-sentence $\phi$ is:\begin{enumerate}
        \item $\land$-primitive iff $\phi=\exists\bar{x}\bigland\limits_{i=1}^nR_i(\bar{x_i})$.
        \item existential-$\land$-positive iff $\phi=\biglor\limits_{i=1}^m\psi_i$ where each $\psi_i$ is a $\land$.p-sentence.
    \end{enumerate}
\end{definition}
\begin{remarks}
    In the context of \emph{finite} models over \emph{distributive} interpreting lattices we recover the  various distribution laws regarding $\land$, $\lor$ and $\exists$, when it comes to the quantifier it follows because we may essentially view $\forall$ and $\exists$ as conjunction and disjunction respectively.
    \[||\forall x\phi||^M_v=\bigland\{||\phi||^M_{v[x\mapsto m]}:m\in M\}\;\; ||\exists x\phi||^M_v=\biglor\{||\phi||^M_{v[x\mapsto m]}:m\in M\}.\]
    Using this and distributivity between $\land$ and $\lor$ it is straightforward (if tedious) to check that we have the desired distribution laws for $\exists$ with $\land$ and $\lor$.
    \begin{align*}
        ||\exists x\exists y(\phi(x)\land\psi(y))||&=||\exists x \phi(x)\land\exists y\psi(y)||\\
        ||\exists x\exists y(\phi(x)\lor\psi(y))||&=||\exists x\phi(x)\lor\exists y\psi(y)||
    \end{align*}
\end{remarks}

This outlines the syntax we are interested in. Given that we've introduced a number of different maps between models, the question becomes which pairings are viable candidates. We introduce two main homomorphism preservation theorems, one for the mixed algebra case and another for the fixed algebra case.
\begin{theorem}[Finite Protomorphism Preservation Theorem]\leavevmode\\
    Let $\CP$ be a predicate language and $\phi$ a consistent $\CP$-sentence in the finite (i.e.\ $Mod_{fin}(\phi)\not=\varnothing)$.
    Then $\phi$ is equivalent in the finite to an e.$\land$.p-sentence $\psi$ iff $\phi$ is preserved under protomorphisms. That is there exists an e.$\land$.p-sentence $\psi:Mod_{fin}(\phi)=Mod_{fin}(\psi)$ iff $Mod_{fin}(\phi)$ is closed under $\rightarrow_p$. 
\end{theorem}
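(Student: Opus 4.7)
The proof splits into an easy and a hard direction. For the easy direction, I would show each e.$\land$.p-sentence is preserved under protomorphisms by induction on structure. The key case is a $\land$-primitive $\psi=\exists\bar{x}\bigwedge_{i=1}^{k}R_i(\bar{x}_i)$: if $||\psi||^{(A,F,M)}\in F$ and $g\colon (A,F,M)\to_p(B,G,N)$, primeness of $F$ (applied to the finite supremum defining $||\exists||$ on a finite domain) yields a witness tuple $\bar{a}\in M^{|\bar{x}|}$ with each $R_i^M(\bar{a}_i)\in F$; the protomorphism gives each $R_i^N(g(\bar{a}_i))\in G$, and the filter properties of $G$ give $||\psi||^{(B,G,N)}\in G$. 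The disjunction case uses primeness of the source filter in the same way. It is exactly the requirement that $F$ be \emph{prime}, and not merely a filter, that makes this induction work.

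For the converse I would follow Rossman's compactness-free proof of the classical finite h.p.t., adapted to the nested-rank framework of Section \ref{B&FSystem}. For each $n\in\omega$, let $\psi_n$ be the disjunction of all e.$\land$.p-sentences of nested rank at most $n$ that every finite model of $\phi$ satisfies; in a finite relational language there are only boundedly many such sentences up to equivalence on finite models, so $\psi_n$ is itself (equivalent to) a single e.$\land$.p-sentence. We always have that $\phi$ entails $\psi_n$ on finite models, so it suffices to show that for some $n$ the converse holds: every finite model of $\psi_n$ is a model of $\phi$.

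To produce such an $n$ I would argue by contradiction: if no $n$ works, then for every $n$ there is a finite $(A_n,F_n,M_n)\models\psi_n$ with $(A_n,F_n,M_n)\not\models\phi$. The strategy is to construct a finite model $(B_n,G_n,N_n)\models\phi$ admitting a protomorphism into $(A_n,F_n,M_n)$; preservation under protomorphisms then forces $(A_n,F_n,M_n)\models\phi$, a contradiction. Construction of $N_n$ proceeds by Rossman-style covering: agreement of e.$\land$.p-type up to rank $n$ is witnessed, via Theorem \ref{mixedfintieb&f} restricted to the positive existential fragment, by the existence of enough one-sided partial protomorphisms, and $N_n$ is then assembled as a cover over finite models of $\phi$ whose e.$\land$.p-types up to rank $n$ collectively dominate that of $M_n$.

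The main obstacle is adapting this covering construction to the many-valued setting with varying interpreting lattices. Rossman's classical argument leans on the rigidity of cores under retractions and the two-valued interaction between homomorphism equivalence and e.p. rank; in our setting the algebras $(A_n,F_n)$ and $(B_n,G_n)$ may differ entirely, with no algebra morphism available, only a protomorphism on the domains. The saving grace is that a protomorphism preserves exactly the prime-filter membership pattern of atomic relations, and by the easy direction this is precisely the information recorded by e.$\land$.p-types. So the combinatorial heart of Rossman's covering — which ultimately acts on the positive part of the atomic diagram — should transfer essentially unchanged, with primeness of the filter doing the case-splitting work that classical two-valued semantics performs for free. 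Once this covering construction is verified, extracting a single equivalent e.$\land$.p-sentence from $\psi_n$ is the finitary counting step already indicated.
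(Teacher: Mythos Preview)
Your easy direction is correct and matches the paper's Lemma~\ref{protoprevecon} (via Lemma~\ref{easylemma}).

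For the hard direction you miss the paper's central simplification. Rather than adapting Rossman's covering and extendability machinery to the many-valued setting, the paper \emph{reduces to the classical case}. To each $(A,F,M)$ it associates a classical $\CP$-model $M^{\top}$ over the two-element Boolean algebra by setting $R^{M^{\top}}(\bar m)=\top$ iff $R^M(\bar m)\in F$. One then checks (Lemmas~\ref{clastranshomequiv} and~\ref{clastransnhom}) that $(A,F,M)\leftrightarrows_p M^{\top}$, that e.$\land$.p-satisfaction transfers through this translation, and that $(A,F,M)\to^n_p(B,G,N)$ iff $M^{\top}\to^n_{cl} N^{\top}$ classically. With these facts in hand, Rossman's $n$-cores, canonical sentences $\theta_C$, and finite extendability are applied \emph{on the classical side} to the $M^{\top}$'s (Lemmas~\ref{hardlemma1} and~\ref{hardlemma2}), and the resulting e.$\land$.p-sentence is pulled back. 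The varying-algebra obstacle you flag simply vanishes: all the combinatorics takes place over $\mathbf{2}$, and the back-and-forth needed is the fixed-algebra one (Theorem~\ref{fixedfinteb&f}) measured by ordinary quantifier depth, not nested rank.

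Your proposed route also has a concrete error in the definition of $\psi_n$. Taking the disjunction of all e.$\land$.p \emph{consequences} of $\phi$ of bounded rank yields something \emph{weaker} than each consequence; there is no mechanism by which such a $\psi_n$ would entail $\phi$, and your contradiction argument does not supply one. The sentence Rossman (and the paper) actually builds is the disjunction of the canonical $\land$-primitive sentences $\theta_C$ associated to the finitely many $n$-cores $C$ of models of $\phi$, characterised by $C\to N\iff N\models\theta_C$. Even with that fix, running the covering construction directly over varying interpreting lattices is unnecessary work once you have the translation $M\mapsto M^{\top}$.
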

\begin{theorem}[Fixed Finite Homomorphism Preservation Theorem]\leavevmode\\
    Let $\CP$ be a predicate language, $(A,F)$ an interpreting lattice and $\phi$ a consistent $\CP$ sentence over $(A,F)$ in the finite (i.e.\ $Mod^{A,F}_{fin}(\phi)\not=\varnothing$). The following are equivalent:\begin{enumerate}[label=\roman*.]
        \item $\phi$ is equivalent over $(A,F)$ in the finite to an e.$\land$.p sentence $\psi$, i.e.\ there is an e.$\land$.p-sentence $\psi:Mod^{A,F}_{fin}(\phi)=Mod^{A,F}_{fin}(\psi)$.
        \item $\phi$ is preserved under protomorphisms on $(A,F)$, i.e.\ $Mod^{A,F}_{fin}(\phi)$ is closed under $\rightarrow_p$.
        \item $\phi$ is preserved under homomorphisms on $A$, i.e.\ $Mod^{A,F}_{fin}(\phi)$ is closed under $\rightarrow$.
    \end{enumerate}
\end{theorem}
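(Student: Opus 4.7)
The plan is to treat (i)$\Rightarrow$(ii) and (ii)$\Leftrightarrow$(iii) as routine observations and then concentrate on the substantive implication (iii)$\Rightarrow$(i) by adapting Rossman's compactness-free argument.

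For (i)$\Rightarrow$(ii) I would use induction on the construction of an e.$\land$.p-sentence $\psi$. The atomic case is exactly the definition of a protomorphism; the cases for $\bigwedge$ and $\bigvee$ use that $F$ is a prime filter; and the $\exists$ case uses that finite models automatically witness existentials: any supremum over a finite set is attained, so the induction hypothesis applies to an actual witness whose image in the codomain witnesses the existential. The equivalence (ii)$\Leftrightarrow$(iii) is essentially formal in the fixed-algebra setting: pairing any protomorphism $g$ with the $\CL$-homomorphism $\mathrm{id}_A$ yields a homomorphism $(\mathrm{id}_A, g)$, while every homomorphism $(f,g)$ carries an underlying protomorphism $g$, so the relations $\to$ and $\to_p$ coincide on $Mod^{A,F}_{fin}(\phi)$ and the two closure conditions are the same property.

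For (iii)$\Rightarrow$(i) the central construction is the \emph{canonical $\land$-primitive sentence} of a finite $(A,F,M)$ with $M = \{m_1, \ldots, m_n\}$:
\[\sigma_M := \exists x_{m_1}\cdots\exists x_{m_n}\bigwedge_{\substack{R\in\CP,\ \bar m\in M^{\mathrm{ar}(R)}\\ R^M(\bar m)\in F}}R(\bar x_{\bar m}).\]
Two observations are immediate: (a) $M\models\sigma_M$ via the valuation $x_{m_i}\mapsto m_i$ together with the filter property on $F$; and (b) for any finite $N\models\sigma_M$, finite witnessing supplies an assignment realising the existentials, and the induced map $m_i \mapsto n_i$ is a protomorphism $M\to_p N$ (the conjuncts of $\sigma_M$ are exactly those $R(\bar x_{\bar m})$ with $R^M(\bar m)\in F$, and all of them land in $F$ under this assignment). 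Let $\Sigma$ denote the collection of all $\land$-primitive sentences $\sigma$ with $\sigma\models_{(A,F),fin}\phi$. Then $\bigvee\Sigma$ defines $Mod^{A,F}_{fin}(\phi)$: the inclusion $\bigvee\Sigma\models\phi$ is immediate from the definition of $\Sigma$, while $\phi\models\bigvee\Sigma$ follows because for any finite $M\models\phi$, (b) together with hypothesis (iii) shows $\sigma_M\models\phi$, so $\sigma_M\in\Sigma$ and (a) gives $M\models\sigma_M$.

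The principal obstacle is to reduce the potentially infinite disjunction $\bigvee\Sigma$ to a genuine (finite) e.$\land$.p-sentence. My plan is to restrict to $\Sigma_k\subseteq\Sigma$ of $\land$-primitive sentences of quantifier depth at most $k$: because the relational signature is finite, $\Sigma_k$ is finite up to logical equivalence and $\bigvee\Sigma_k$ is a bona fide e.$\land$.p-sentence. I then need to establish that for some $k$ depending only on $\phi$, $\bigvee\Sigma_k$ already defines $Mod^{A,F}_{fin}(\phi)$. I expect this to be the hardest part. The strategy would be to apply Theorem \ref{fixedfinteb&f}: for $k$ at least the quantifier depth of $\phi$, membership in $Mod^{A,F}_{fin}(\phi)$ is constant on each equivalence class of the appropriate $k$-step back-and-forth relation, and there are only finitely many such classes (since the theory of $\phi$-sentences up to depth $k$ is finite up to equivalence). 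For each class $C\subseteq Mod^{A,F}_{fin}(\phi)$ one aims to select a representative $M_C$ of bounded size whose canonical sentence $\sigma_{M_C}$ lies in $\Sigma_k$ and is modelled by every member of $C$; equivalently, to show that every member of $C$ admits a protomorphism from such an $M_C$. This is where Rossman's machinery of hom-cores and saturated covers needs to be transferred to the many-valued setting, taking care that the witnessing choices at each stage respect the prime filter $F$.
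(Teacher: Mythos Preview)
Your treatment of (i)$\Rightarrow$(ii) and (ii)$\Leftrightarrow$(iii) matches the paper's. For the hard direction, however, your route diverges from the paper's and contains a genuine gap.

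You propose to carry out Rossman's combinatorial argument directly in the many-valued setting, and at the key step you assert that ``there are only finitely many such classes (since the theory of $\phi$-sentences up to depth $k$ is finite up to equivalence)''. Over an infinite interpreting lattice this is false: the simple back-and-forth relation of Definition~\ref{fixedb&fsys} demands exact equality of truth values $R^M(\bar m)=R^N(r(\bar m))$, so already among one-element structures there are as many $\isom_0$-classes as there are elements of $A$. The corresponding $\equiv_k^s$-classes are likewise infinite in general, and your plan to pick a bounded-size representative $M_C$ from each class cannot get started. Transferring Rossman's hom-cores and covers ``with care that the witnessing choices respect $F$'' does not address this, because the difficulty is not in the combinatorics but in the fact that the many-valued $\equiv_k$ is too fine.

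The paper avoids this entirely by a translation you have implicitly already set up. Your canonical sentence $\sigma_M$ depends only on which atomic facts land in $F$; that is, $\sigma_M=\sigma_{M^{\top}}$ where $M^{\top}$ is the \emph{classical} structure with $R^{M^{\top}}(\bar m)=\top$ iff $R^M(\bar m)\in F$. The paper makes this translation explicit and proves two bridging lemmas: $(A,F,M)\leftrightarrows_p M^{\top}$ with agreement on all e.$\land$.p-sentences (Lemma~\ref{clastranshomequiv}), and $(A,F,M)\rightarrow_p^n(B,G,N)$ iff $M^{\top}\rightarrow_{cl}^n N^{\top}$ (Lemma~\ref{clastransnhom}). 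With these in hand the hard direction reduces to Rossman's classical theorem applied verbatim to the structures $M^{\top}$: the $n$-cores and finite extendability are invoked as a black box on the classical side (Lemmas~\ref{hardlemma1} and~\ref{hardlemma2}), and the resulting e.$\land$.p-sentence works unchanged over $(A,F)$. No transfer of Rossman's machinery is needed, and the finiteness you need comes for free from the classical setting.
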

\begin{remarks}
    Observe that taking $(A,F)=(\mathbf{2},\{\top\})$ this statement is exactly Rossman's finite homomorphism preservation theorem. 
\end{remarks}

Viable here means that the formulas of the type referred to are actually preserved by the maps in question. Proving that the e.$\land$.p and protomorphism pairing is viable is a straightforward induction driven by the semantic properties of $\exists,\land$ and $\lor$ we have already encountered. Whilst straightforward it is worth belabouring the point a little to make clear exactly what behaviour of interpreting lattices is driving this. Namely, by considering finite models the behaviour of $\exists$ reduces to a large disjunction across all elements of the domain. Combined with the nice behaviour of $F$ with respect to $\land$ and $\lor$ from definition ~\ref{def:interplat}, the truth of formulas leading with an existential quantifier is \emph{witnessed} by some element of the domain, that is for any finite $\CP$-model $(A,F,M)$ and $\CP$-formula $\exists x\phi(x)$:\[||\exists x\phi(x)||^M\in F\text{ iff }\exists m\in M:||\phi(m)||^M.\] Putting this all together, the validity of an e.$\land$.p-sentence in a model is entirely determined at the atomic level and independent of the given interpreting lattice. 
\begin{lemma}\label{easylemma}
    Let $(A,F,M)$ be a finite $\CP$-model and $\psi=\biglor\limits_{i=1}^n\theta_i$ an e.$\land$.p-sentence. Then $(A,F,M)\models\psi$ iff $\exists 1\leq i\leq n:\exists\bar{m}\in M:$ for all atomic subformula $R_j(\bar{x_j})\subseteq\theta_i$ $R^M_j(\bar{m_j})\geq 1_A$, where $\bar{m_j}$ denotes the subsequence of $\bar{m}$ corresponding to the subsequence $\bar{x_j}$ of $\bar{x}$. 
\end{lemma}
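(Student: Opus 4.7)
The lemma unwinds directly from three features of interpreting lattices combined with finiteness of $M$: the prime filter clauses $a\land b \in F$ iff $a,b \in F$ and $a\lor b \in F$ iff $a \in F$ or $b \in F$ (Definition \ref{def:interplat}), together with the fact that for finite $M$ the semantic clause $||\exists x\phi||^M_v = \sup\{||\phi||^M_{v_{x=a}} : a\in M\}$ collapses to a finite join in $A$. The plan is to peel the sentence $\psi = \bigvee_{i=1}^n \theta_i$ apart from the outside in, applying the appropriate clause at each layer and threading the equivalences together.

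First, I would apply primeness of $F$ iteratively to the outer disjunction to reduce $||\psi||^M \in F$ to the existence of some $1 \le i \le n$ with $||\theta_i||^M \in F$. Fixing such an $i$ with $\theta_i = \exists \bar{x} \bigwedge_{j=1}^{k} R_j(\bar{x_j})$, I would then unfold the quantifier prefix: iterating the semantic clause for $\exists$ across the finitely many variables in $\bar{x}$, together with finiteness of $M$, expresses the value as a finite join
$$||\theta_i||^M = \bigvee_{\bar{m}\in M^{|\bar{x}|}} \left|\left|\bigwedge_{j=1}^{k} R_j(\bar{m_j})\right|\right|^M,$$
where $\bar{m_j}$ is the subsequence of $\bar{m}$ indexed exactly as $\bar{x_j}$ indexes $\bar{x}$ (this bookkeeping is forced by the atomic semantic clause $||R(\bar{x})||^M_v = R^M(v(\bar{x}))$). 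A second iterated application of primeness then yields $||\theta_i||^M \in F$ iff some witness tuple $\bar{m}$ makes $||\bigwedge_j R_j(\bar{m_j})||^M \in F$.

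Finally, iterating the $\land$-filter clause across the $k$ conjuncts gives $||\bigwedge_j R_j(\bar{m_j})||^M \in F$ iff $R^M_j(\bar{m_j}) \in F$ for each $j$, which is the atomic-level condition asserted by the lemma. Composing the three chains of equivalences delivers the stated biconditional. There is no real obstacle: the content is essentially the observation that an e.$\land$.p-sentence involves only those connectives whose interaction with $F$ is governed cleanly by primeness or the filter condition, so truth of $\psi$ in $(A,F,M)$ is detected at the atomic level independently of the rest of the algebraic structure on $A$. The one point I would flag explicitly in the write-up is that the reduction of $\exists$ to a finite join, and hence the legitimacy of using the binary primeness clause rather than some infinitary strengthening of it, is exactly where finiteness of $M$ is being used; in the infinite case the supremum defining $||\exists x\phi||^M_v$ need not be attained and the argument breaks down, which is ultimately the phenomenon the rest of the paper has to work around.
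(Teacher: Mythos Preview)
Your proposal is correct and follows essentially the same route as the paper's proof: both peel off the outer disjunction via primeness, collapse the existential block to a finite join using finiteness of $M$, apply primeness again to that join, and finish with the filter clause for $\land$. Your added remark about where exactly finiteness is used (to avoid needing an infinitary primeness condition on $F$) is a helpful gloss that the paper leaves implicit.
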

\begin{proof}
    \begin{align*}
        &(A,M)\models\psi\text{ iff }||\psi||^M\in F\text{ iff }||\biglor\limits_{i=1}^n\theta_i||^M\in F &\\
        &\text{ iff } \exists 1\leq i\leq n:||\theta_i||^M\in F &\text{ (def ~\ref{def:interplat})}\\
        &\text{ iff }\exists 1\leq i\leq n:||\biglor\limits_{\bar{m}\in M}\bigland\limits_{j=1}^M R_j(\bar{x_j})||^M_{v[\bar{x}\mapsto\bar{m}]}\in F & \text{(finiteness of model)}\\
        &\text{ iff }\exists 1\leq i\leq n:\exists\bar{m}\in M:||\bigland\limits_{j=1}^mR_j(\bar{x_j})||^M_{v[\bar{x}\mapsto\bar{m}]}\in F & \text{(def ~\ref{def:interplat})}\\
        &\text{ iff }\exists 1\leq i\leq n:\exists\bar{m}\in M:\text{ for all atomic subformula }R_j\subseteq\theta_i\; ||R_j(\bar{m_j})||\in F
        &\text{ (def ~\ref{def:interplat})}.
    \end{align*}\qedhere
\end{proof}

With this we can quickly prove the easy direction of the f.p.p.t.
\begin{lemma}\label{protoprevecon}
    Let $\CP$ be a predicate language and $\psi$ an e.$\land$.p-sentence. $Mod_{fin}(\psi)$ is closed under $\rightarrow_p$.
\end{lemma}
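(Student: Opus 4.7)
The plan is to use Lemma \ref{easylemma} as the workhorse: it reduces satisfaction of an e.$\land$.p-sentence in a finite model entirely to an atomic-level witness condition, and this condition is transparently preserved by the very definition of protomorphism. So the strategy is to push the atomic witness across the protomorphism, and then invoke the lemma again on the target side.

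Concretely, I would begin by fixing $\psi = \bigvee_{i=1}^n \theta_i$ with each $\theta_i = \exists \bar{x}\, \bigwedge_{j=1}^{k_i} R_j(\bar{x_j})$, and assume $(A,F,M), (B,G,N) \in Mod_{fin}$ with $(A,F,M) \models \psi$ and $g\colon M \to N$ a protomorphism. Applying Lemma \ref{easylemma} to $(A,F,M)$ produces some disjunct $\theta_i$, a tuple $\bar{m} \in M$, and the information that $R_j^M(\bar{m_j}) \in F$ for every atomic subformula $R_j(\bar{x_j})$ occurring in $\theta_i$.

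Next I would transfer this witness through $g$. For each such atomic subformula, Definition \ref{protomorph} applied to $R_j$ and the tuple $\bar{m_j}$ gives directly that $R_j^N(g(\bar{m_j})) \in G$. Thus the tuple $g(\bar{m}) \in N$ satisfies the atomic witness condition of Lemma \ref{easylemma} for the same disjunct $\theta_i$ in $(B,G,N)$. Since $(B,G,N)$ is also finite, applying the reverse direction of Lemma \ref{easylemma} yields $(B,G,N) \models \psi$.

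There is no real obstacle in this proof: Lemma \ref{easylemma} has already absorbed all the subtlety about how $\land$, $\lor$ and $\exists$ interact with the prime filter $F$ and the finiteness of the domain, and the protomorphism condition is precisely what is needed to move from the atomic-level witness in $M$ to one in $N$. The only details to check are bookkeeping: that we use finiteness of both source and target when invoking the lemma in each direction, and that the argument is independent of which interpreting lattice each model is defined over (which is exactly why the protomorphism notion suffices without needing an algebra map).
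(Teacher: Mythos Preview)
Your proposal is correct and follows essentially the same approach as the paper's proof: apply Lemma~\ref{easylemma} to extract an atomic witness in $(A,F,M)$, push it through the protomorphism $g$ using Definition~\ref{protomorph}, and invoke Lemma~\ref{easylemma} again on $(B,G,N)$. Your write-up is in fact slightly more careful about bookkeeping (explicitly noting the role of finiteness on both sides and writing $\in G$ rather than the paper's $\geq 1_B$), but the argument is the same.
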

\begin{proof}
    Suppose $(A,F,M)\models\psi$ and $(A,F,M)\rightarrow_p(B,G,N)$, i.e.\ $\exists g\colon(A,M)\rightarrow (B,N)$ where $g$ is a protomorphism. By the previous lemma $\exists 1\leq i\leq n:\exists\bar{m}\in M$ such that for all atomic $R_j\subseteq\theta_i$ $R_j^M(\bar{m})\in F$. Therefore for all atomic $R_j\subseteq\theta_i$ $R_j^N(g(\bar{m_j}))\geq 1_B$ and by the previous lemma again $(B,G,N)\models\psi$. 
\end{proof}

This of course immediately implies that $e$.$\land$.p-sentences are preserved under homomorphisms. In the fixed algebra case any protomorphism is also a homomorphism when paired with the identity map and so the converse is also true, that is any formula is preserved under protomorphisms iff it is preserved under homomorphisms. In the mixed algebra case the relationship between protomorphisms and homomorphisms proper is more complicated, we will briefly touch on this in our remarks on future study in section ~\ref{Conclusion}.

The viability of the theorem is the motivation behind insisting that the subset $F$ is a \emph{prime} filter. Requiring $F$ to simply be a filter and the corresponding behaviour of $\land$ and $\lor$ with respect to $F$ made in def ~\ref{def:interplat} can be seen as inherent to these connective being a conjunction/disjunction, the ask for $F$ to be prime with respect to $\lor$ forms a more substantive restriction and one may wonder if it can be dropped. Its necessity can be easily demonstrated; without it we lose viability.

\begin{example}\label{primerequirement}
    Let $A$ be the $4$ element Boolean algebra considered as an interpreting lattice where $F=\{\top\}$ which we label as follows:\begin{figure}[h]
    \centering
    \begin{tikzpicture}[node distance=10mm]
    \node[label={[shift={(0.0,-0.80)}]$\bot$}] (1) at (0.0,0.0) {$\cdot$};
    \node[label={[shift={(-0.25,-0.45)}]$a$}] (2) at (-1.0,1.0) {$\cdot$};
    \node[label={[shift={(0.25,-0.45)}]$b$}] (3) at (1.0,1.0) {$\cdot$};
    \node[label={[shift={(0.0,-0.20)}]$\top$}] (4) at (0.0,2.0) {$\cdot$};
    \draw [](1)--(2); 
    \draw [](1)--(3);
    \draw [](2)--(4);
    \draw [](3)--(4);
    \end{tikzpicture}
\end{figure}

    Taking $\CP=\{R\}$ we define two $\CP$-models over $A$ $\CM_1$ and $\CM_2$ with the same domain $M=\{x,y\}$ and \begin{align*}
        &R^{M_1}:\begin{cases}
            x \mapsto a\\
            y \mapsto b
        \end{cases} & R^{M_2}:\begin{cases}
            x \mapsto a\\
            y \mapsto \bot
        \end{cases}
    \end{align*}
    Then $\CM_1\rightarrow_p\CM_2$ via the identity but if we consider the e.$\land$.p-sentence $\exists zR(z)$ we can easily check that: \begin{align*}
        &\CM_1\models\exists zR(z) & \CM_2\not\models\exists zR(z).
    \end{align*}
\end{example}

When introducing interpreting lattices we allowed for the algebraic signature to be richer and possibly include connectives beyond $\lor$ and $\land$ that act as kinds of disjunction and conjunction. This is the case for models defined over UL-chains or $l$-monoids which come with the additional notion of strong conjunction $\&$ interpreted by the monoid operation. The presence of such a connective lets us define existential-$\&$-positive sentences analogously and also a sentences which combine both forms of conjunction.
\begin{definition}\label{epsentences}
    Let $A$ be an algebra in signature $\CL$ such that $\{\land,\lor,\&\}\in\CL$. Let $\CP$ be a predicate language. We say a $\CP$-sentence
    \begin{enumerate}
        \item $\&$-primitive formula iff $\phi=\exists\bar{x}\bigodot\limits_{i=1}^nR_i(\bar{x_i})$.
        \item $\land$-$\&$-primitive formula (or primitive-positive) iff $\phi=\exists \bar{x}\bigland\limits_{i=1}^n\bigodot\limits_{j=1}^m R_{i,j}(\bar{x}_{i,j})$.
        \item existential-$\&$-positive iff $\phi=\biglor\limits_{i=1}^m\psi_i$ where each $\psi_i$ is a $\&$.p-sentence.
        \item existential-positive iff $\phi=\biglor\limits_{i=1}^m\psi_i$ where each $\psi_i$ is a p.p-sentence. 
    \end{enumerate}
\end{definition}

Similarly, when defining models over either $c$-semirings or semirings the intended interpretation of the conjunction is the monoid operation of the algebra. In this sense, they can be thought of as being defined in the algebraic signature $\{\lor,\&,0,1\}$. In the general these conjunctions may not behave well with respect to the filter $F$ that defines validity. To take the example of UL-chains, we recall $F$ is the upset of $1$ in the lattice order and can observe that homomorphisms (and by extension protomorphisms) fail to preserve the validity of sentences involving $\&$. 

\begin{example}
    Let $A$ be the standard UL-chain on $[0,1]$, that is we take $\land=min$, $\lor=max$ and $\&=\ast\colon[0,1]^2\rightarrow [0,1]$ by \[x\ast y\begin{cases}
        min(x,y) & \text{if } y\leq 1-x\\
        max(x,y) & \text{o.w}.
    \end{cases}\]
    The unit of $\ast$ is $\frac{1}{2}$. 
    
    We consider $\CP=\{R,P\}$ where both $R$ and $P$ are unary relation symbols and define two models $M_1$ and $M_2$ over $A$ both with domain $\{m\}$ and \begin{align*}
        &R^{M_1}(m)=\frac{5}{6} &P^{M_1}(m)=\frac{4}{6} \\
        &R^{M_2}(m)=\frac{1}{3} &P^{M_2}(m)=\frac{1}{6}
    \end{align*}
    Then $M_1\rightarrow M_2$ via the double identity map, but we can make the following calculations to show that the $e.p$-sentence $\exists x\exists yR(x)\amp P(y)$ is not preserved.\begin{align*}
        ||\exists x\exists yR(x)\amp P(y)||^{M_1}=||R(m)||^{M_1}\ast||P(m)||^{M_1}=\frac{5}{6}&\ast\frac{1}{3}=\frac{5}{6}\geq \frac{1}{2}\\
        ||\exists x\exists yR(x)\amp yP(y)||^{M_2}=||R(m)||^{M_2}\ast||P(m)||^{M_2}=\frac{4}{6}&\ast\frac{1}{6}=\frac{1}{6}<\frac{1}{2}.
\end{align*}
\end{example}

Taking the reducts of this example to the appropriate languages yields similar examples for $l$-monoids, c-semirings and semirings. There is more we can say about the status of $e.\&.p$-sentences and $e.p$-sentences - we will return to this topic in Section~\ref{ResiCon}.


\section{Finite Homomorphism Preservation}\label{FinHPT}

Let us turn to our core discussion - proving the harder direction of our preservation theorems. The heart of our proof strategy is a recognition that Rossman's classical proof is fundamentally about the behaviour of models with minimal interaction with the underlying logic. Similarly, as we demonstrated in the previous section, protomorphisms and e.$\land$.p-sentences are fundamentally concerned with whether certain atomic formula are true or not; the specifics of the associated algebra do not matter. This allows us to translate between $\CP$-models over interpreting lattices and a classical counterpart in such a way that the behaviour with regards to protomorphisms and e.$\land$.p-sentences is preserved and then apply Rossman's result to the classical translations before pulling back to the our own setting. 

This translation can be introduced in a few different ways. Here it is convenient to present it as a $\CP$-model over the 2 element Boolean algebra $\{\top,\bot\}$ which by inspection is an interpreting lattice when paired with $F=\{\top\}$.
\begin{definition}
    Let $(A,F,M)$ be a $\CP$-model over an interpreting lattice $(A,F)$. We define the $\CP$-model $(\{\top,\bot\},\{\top\},M^{\top})$, also denoted simply as $M^{\top}$ as follows:\[R^{M^{\top}}(\bar{m})=\begin{cases}
        \top & \text{ if }R^M(\bar{m})\in F\\
        \bot & \text { if }R^M(\bar{m})\not\in F.
    \end{cases}\] 
\end{definition}
\begin{remarks}
    In the case where $R$ is a 0-ary relation symbol then it takes no argument $\bar{m}$ from the model and accordingly $R^{M^{\top}}=\top$ iff $R^M\in F$ and $R^{M^{\top}}=\bot$ iff $R^M\not\in F$.
    
    The critical feature of the model just defined is that it is both a many-valued $\CP$-model and also simply a classical $\CP$-model (utilising characteristic functions). This allows us to apply results from classical finite model theory such as Rossman's. Of course, these results have a specific meaning for $M^{\top}$ so we do need to understand how they apply to our context. We make two observations. Firstly, if we consider two $\CP$-models $(A,F,M)$ and $(B,G,N)$ any back-and-forth system between $M^{\top}$ and $N^{\top}$ as defined in definition ~\ref{fixedb&fsys} is also a classical back-and-forth system between them viewed as classical models. We will continue to use the notation $\isom^n_f$ and $\isom_f$ to state the existence of such a back-\&-forth system viewed in either sense. Secondly, a straightforward induction establishes that for any sentence $\phi$ constructed from only $\lor,\land,\exists$ and $\forall$ we have  we have \[M^{\top}\models\phi\text{ iff }M^{\top}\models_{cl} \phi.\]
\end{remarks}

The first basic relationship between a $\CP$-model and its classical counterpart is the precise version of the aforementioned mirroring for both protomorphisms and e.$\land$.p-sentences.
\begin{lemma}\label{clastranshomequiv}
    Let $(A,F,M)$ be a $\CP$-model. Then $(A,F,M)\leftrightarrows_p M^{\top}$ and for any e.$\land$.p-sentence $\psi$ $(A,F,M)\models\psi$ iff $M^{\top}\models\psi$. 
\end{lemma}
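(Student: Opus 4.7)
The plan is to verify both halves of the statement by unpacking the definition of $M^{\top}$ and leveraging Lemma~\ref{easylemma}. The whole result should be essentially bookkeeping: by construction $M^{\top}$ was set up precisely to record, at each tuple, whether the original interpretation lies in $F$, and both protomorphisms and e.$\land$.p-sentences only see this bit of information.

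For the protomorphism claim, the domains of $(A,F,M)$ and $M^{\top}$ coincide (both are $M$), so the natural candidate in each direction is the identity map $\mathrm{id}_M$. To verify it is a protomorphism from $(A,F,M)$ to $M^{\top}$, I would just fix an $n$-ary $R\in\CP$ and $\bar{m}\in M^n$ with $R^M(\bar{m})\in F$; by the defining clause of $M^{\top}$ we immediately get $R^{M^{\top}}(\bar{m})=\top\in\{\top\}$. The other direction is equally direct: if $R^{M^{\top}}(\bar{m})\in\{\top\}$ then $R^{M^{\top}}(\bar{m})=\top$, which by the contrapositive of the second clause in the definition of $M^{\top}$ forces $R^M(\bar{m})\in F$. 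So $\mathrm{id}_M$ witnesses $(A,F,M)\leftrightarrows_p M^{\top}$.

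For the sentence equivalence, I would apply Lemma~\ref{easylemma} to both $(A,F,M)$ and $M^{\top}$ (noting that $M^{\top}$ is a $\CP$-model over the interpreting lattice $(\{\top,\bot\},\{\top\})$ and is finite whenever $M$ is, so the lemma applies in both settings). Writing $\psi=\biglor_{i=1}^{n}\theta_i$ with each $\theta_i=\exists\bar{x}\bigland_{j}R_j(\bar{x_j})$, the lemma reduces each side of the required biconditional to the assertion: there exist $i$ and $\bar{m}\in M$ such that for every atomic conjunct $R_j(\bar{x_j})$ of $\theta_i$, the interpretation $R_j^{M}(\bar{m_j})$ (respectively $R_j^{M^{\top}}(\bar{m_j})$) lies in the designated filter. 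But by the very definition of $M^{\top}$, $R_j^{M}(\bar{m_j})\in F$ holds if and only if $R_j^{M^{\top}}(\bar{m_j})=\top$, so the two atomic conditions agree on the nose, and the two validity statements coincide.

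There is no genuine obstacle here; the only thing to watch for is making sure the two applications of Lemma~\ref{easylemma} are legitimate (i.e.\ $M^{\top}$ really is a finite $\CP$-model over an interpreting lattice with $F=\{\top\}$, which is immediate) and that the domain-identification via $\mathrm{id}_M$ is spelled out so that the reader sees the protomorphism condition is the same bit of information the translation stores. Both points are verifications rather than arguments.
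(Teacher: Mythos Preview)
Your argument is correct and matches the paper's for the first claim: both use $\mathrm{id}_M$ and the defining biconditional $R^M(\bar m)\in F\iff R^{M^\top}(\bar m)=\top$. For the second claim the paper takes a marginally shorter route: having just established $(A,F,M)\leftrightarrows_p M^\top$, it simply invokes Lemma~\ref{protoprevecon} (closure of $Mod_{fin}(\psi)$ under $\rightarrow_p$) in each direction, rather than re-opening Lemma~\ref{easylemma} on both models. Your direct appeal to Lemma~\ref{easylemma} is perfectly valid and is essentially what underlies Lemma~\ref{protoprevecon} anyway, so the difference is cosmetic; the paper's version just avoids repeating the atomic-level unpacking you perform.
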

\begin{proof}
    We consider the domain map $id_M\colon M\rightarrow M$ and check it is a protomorphism from $(A,F,M)$ into $M^{\top}$ and vice versa. Indeed for any $R\in\CP$ and $\bar{m}\in M$ $R^M(\bar{m})\in F$ iff $R^{M{\top}}(\bar{m})=\top$.

    The second claim follows immediately by lemma ~\ref{protoprevecon}.
\end{proof}

The second relationship revolves around a concept central to Rossman's proof, the idea of $n$-homomorphisms which are themselves defined using the Gaifman graph of a structure~\cite[Section 2\&3]{Rossman08}. 
\begin{definition}
    Let $(A,F,M)$ be a $\CP$-model. We define its Gaifman graph, denoted $G(A,F,M)$ as the graph with vertex set $M$ and $(m,n)\in E$ iff $\exists R\in \CP:\exists\bar{x}\in M:m,n\in\bar{x}$ and $R^M(\bar{x})\in F$.

    We define the tree-depth of $(A,F,M)$, denoted $td(A,F,M)$ to be the tree-depth of $G(A,F,M)$.
\end{definition}
\begin{remarks}
    It is immediately clear that $G(A,F.M)=G(M^{\top})$, with the latter also being the Gaifman graph of $M^{\top}$ when viewed as a classical structure. 
\end{remarks}

We use this to define a form of protomorphism relative to tree-depth.
\begin{definition}
    Let $(A,F,M),(B,G,N)$ be $\CP$-models. We say $(A,F,M)$ $n$-protomorphically maps to $(B,G,N)$, denoted $(A,F,M)\rightarrow^n_p(B,G,N)$ iff for all $\CP$-models $(C,H,S)$ with $td(C,H,S)\leq n$ $(C,H,S)\rightarrow_p(A,F,M)$ implies $(C,H,S)\rightarrow_p(B,G,N)$. 
\end{definition}

We could equivalently define $\rightarrow^n_p$ via the translated structures directly.
\begin{lemma}\label{clastransnhom}
    Let $(A,F,M),(B,G,N)$ be $\CP$-models. $(A,F,M)\rightarrow^n_p(B,G,N)$ iff $M^{\top}\rightarrow^n_{cl}N^{\top}$, that is iff for all \textit{classical} models $C:td(C)\leq n$ $C\rightarrow M^{\top}$ implies $C\rightarrow N^{\top}$.
\end{lemma}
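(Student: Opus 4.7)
The plan is to prove both directions by reducing everything to composition of protomorphisms, using Lemma~\ref{clastranshomequiv} to convert between any model and its classical counterpart and the fact that for classical $\CP$-models (over the two-element Boolean interpreting lattice) the notions of classical homomorphism and protomorphism coincide. I would first record two preliminary observations explicitly: (a) protomorphisms compose, which is immediate from their definition, since if $R^M(\bar m)\in F$ then $R^N(g(\bar m))\in G$, then $R^P(h(g(\bar m)))\in H$; and (b) $G(A,F,M)=G(M^\top)$, hence $td(A,F,M)=td(M^\top)$, which is immediate from the definition of the Gaifman graph together with the definition of $M^\top$ (an atomic value lies in $F$ iff its translation equals $\top$).

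For the forward direction, I would assume $(A,F,M)\to^n_p (B,G,N)$ and take an arbitrary classical $\CP$-model $C$ with $td(C)\leq n$ and $C\to M^\top$. Reading $C$ as a $\CP$-model over the two-element Boolean interpreting lattice and noting that a classical homomorphism is automatically a protomorphism, I get $C\to_p M^\top$. Composing with the protomorphism $M^\top\to_p (A,F,M)$ supplied by Lemma~\ref{clastranshomequiv}, I obtain $C\to_p (A,F,M)$, and since $td(C)\leq n$ the hypothesis yields $C\to_p (B,G,N)$. Composing once more with $(B,G,N)\to_p N^\top$ from Lemma~\ref{clastranshomequiv} gives $C\to_p N^\top$, which, being between classical structures, is a classical homomorphism $C\to N^\top$.

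For the backward direction, I would assume $M^\top\to^n_{cl} N^\top$ and take $(C,H,S)$ with $td(C,H,S)\leq n$ and $(C,H,S)\to_p (A,F,M)$. Using observation (b), $td(S^\top)=td(C,H,S)\leq n$. Composing the protomorphism $S^\top\to_p (C,H,S)$ with the given $(C,H,S)\to_p (A,F,M)$ and then with $(A,F,M)\to_p M^\top$, I obtain $S^\top\to_p M^\top$, which read classically is $S^\top\to M^\top$. The hypothesis then yields $S^\top\to N^\top$, i.e.\ $S^\top\to_p N^\top$, and composing with $(C,H,S)\to_p S^\top$ and $N^\top\to_p (B,G,N)$ delivers $(C,H,S)\to_p (B,G,N)$.

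I do not anticipate a substantive obstacle here: the whole argument is a diagram chase built out of Lemma~\ref{clastranshomequiv} and composition of protomorphisms. The one point to be careful about is the bookkeeping in the backward direction, namely verifying that the tree-depth of $(C,H,S)$ (defined via the Gaifman graph of the many-valued structure) agrees with the tree-depth of its classical translation $S^\top$, so that the classical hypothesis applies to $S^\top$. This is exactly observation (b) and follows directly from the definition of $G$ and of $M^\top$, but it is the only place where the translation from the many-valued to the classical setting is genuinely used rather than merely invoked.
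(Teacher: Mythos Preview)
Your proof is correct and follows essentially the same approach as the paper's. The only stylistic difference is that you route everything through Lemma~\ref{clastranshomequiv} and explicit composition of protomorphisms, whereas the paper observes more directly that the \emph{same} domain map that witnesses $S\to M^{\top}$ (resp.\ $S^{\top}\to N^{\top}$) already witnesses $S\to_p (A,F,M)$ (resp.\ $(C,H,S)\to_p (B,G,N)$); both arguments amount to the same diagram chase.
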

\begin{proof}
    For the only if direction; let $(A,F,M)\rightarrow^n_p(B,G,N)$ and let $S$ be a classical $\CP$-model with tree-depth $\leq n$ such that $S\rightarrow M^{\top}$. We can consider $S$ as a $\CP$-model $(\{\top,\bot\},\{\top\},S)$ over the 2 element Boolean algebra and the map $S\rightarrow M^{\top}$ also witnesses $(\{\top,\bot\},\{\top\},S)\rightarrow_p(A,F,M)$. $td(\{\top,\bot\},\{\top\},S)=td(S)\leq n$ therefore by assumption $(\{\top,\bot\},\{\top\},S)\rightarrow_p(B,G,N)$ and this also witnesses that $S\rightarrow N^{\top}$ as required. 

    For the if direction; letting $M^{\top}\rightarrow^n_{cl}N^{\top}$ and $(C,H,S)\rightarrow_p(A,F,M)$ with $td(C,H,S)\leq n$ then $S^{\top}\rightarrow M^{\top}$ and $td(S^{\top})\leq n$ so $S^{\top}\rightarrow N^{\top}$ and this also witnesses $(C,H,S)\rightarrow_p(B,G,N)$.
\end{proof}

With the two comparisons in place we can immediately prove a many-valued version of Rossman's main combinatorial result - the finite extendability theorem.
\begin{theorem}[Finite Proto Extendability Theorem]\leavevmode\\
    Let $(A,F,M),(B,G,N)$ be $\CP$-models and $n\in\omega$ be such that $(A,F,M)\leftrightarrows^{p(n)}_p(B,G,N)$\footnote{The function p(n) is defined explicitly by Rossman~\cite[Remark 5.3]{Rossman08}, although it can be taken as any 'sufficiently increasing' function}. Then there exists two $\CP$-models $(\{\top,\bot\},\{\top\},\tilde{M})$ and $(\{\top,\bot\},\{\top\},\tilde{N})$ such that:
    \[(A,F,M)\leftrightarrows_p(\{\top,\bot\},\{\top\},\tilde{M})\equiv_n(\{\top,\bot\},\{\top\},\tilde{N})\leftrightarrows_p(B,G,N).\]
\end{theorem}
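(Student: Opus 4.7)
The strategy is to push the entire problem through the $(-)^{\top}$ translation into classical finite model theory, invoke Rossman's classical finite extendability theorem directly, and then translate back. All the infrastructure is already in place: Lemma~\ref{clastranshomequiv} identifies $(A,F,M)$ and $M^{\top}$ up to mutual protomorphism, and Lemma~\ref{clastransnhom} identifies the many-valued $n$-protomorphism relation $\rightarrow^n_p$ with the classical $n$-homomorphism relation on $M^{\top}, N^{\top}$. These two lemmas absorb the full many-valued content of the theorem, reducing the remaining work to a black-box application of Rossman's result.

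Concretely, I would proceed as follows. First, apply Lemma~\ref{clastransnhom} to convert the hypothesis $(A,F,M) \leftrightarrows^{p(n)}_p (B,G,N)$ into the classical statement $M^{\top} \leftrightarrows^{p(n)}_{cl} N^{\top}$, viewing $M^{\top}$ and $N^{\top}$ as ordinary classical $\CP$-structures via the remark following the definition of $M^{\top}$. Next, apply Rossman's classical finite extendability theorem~\cite[Theorem 5.2]{Rossman08} to these two classical structures. This produces classical $\CP$-structures $\tilde{M}, \tilde{N}$ with $M^{\top} \leftrightarrows \tilde{M}$, $\tilde{N} \leftrightarrows N^{\top}$ as classical homomorphisms, and $\tilde{M} \equiv_n \tilde{N}$ classically. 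Now regard $\tilde{M}$ and $\tilde{N}$ as $\CP$-models over the interpreting lattice $(\{\top,\bot\},\{\top\})$; since all four structures in the chain are two-valued, a classical homomorphism is precisely a protomorphism, so the outer relations upgrade to $M^{\top} \leftrightarrows_p \tilde{M}$ and $\tilde{N} \leftrightarrows_p N^{\top}$. Composing with Lemma~\ref{clastranshomequiv}, which gives $(A,F,M) \leftrightarrows_p M^{\top}$ and $N^{\top} \leftrightarrows_p (B,G,N)$, completes the protomorphism chain on both ends.

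For the middle link $\tilde{M} \equiv_n \tilde{N}$ in the many-valued sense of nested rank, I would argue as follows: both structures are defined over the two-element Boolean algebra, and every $\CL,\CP$-sentence $\phi$ evaluates at them to an element of $\{\top, \bot\}$. Since any $\CL$-connective on $\{\top,\bot\}$ reduces to a Boolean function (equivalently, a classical formula), $\tilde{M} \models \phi$ iff $\tilde{M} \models_{cl} \phi^{\ast}$ for a classical translation $\phi^{\ast}$ whose quantifier rank is bounded by $nr(\phi)$. Thus classical $k$-equivalence of $\tilde{M}, \tilde{N}$ for $k$ at least $nr(\phi)$ yields the desired $\equiv_n$.

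The main obstacle is essentially bookkeeping: keeping track of the discrepancy between nested rank (used for $\equiv_n$ in the present paper) and classical quantifier rank (used by Rossman), and absorbing the factor of $3$ per quantifier together with the overhead from translating arbitrary $\CL$-connectives into Boolean ones into the choice of $p(n)$. Because Rossman allows $p$ to be any sufficiently increasing function, this rescaling is harmless, and no genuinely new combinatorics is required beyond what Rossman's theorem already provides.
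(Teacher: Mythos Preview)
Your proposal is correct and follows the same overall architecture as the paper: push through $(-)^{\top}$ via Lemmas~\ref{clastranshomequiv} and~\ref{clastransnhom}, invoke Rossman's classical finite extendability, and compose the protomorphisms on the ends. The one point of divergence is how you secure the middle link $\tilde{M}\equiv_n\tilde{N}$. The paper does not take classical $\equiv_n$ as the output of Rossman's theorem and then translate; instead it extracts the stronger conclusion $\tilde{M}\isom^n_f\tilde{N}$ (a back-and-forth system of depth $n$), observes that this is simultaneously a simple back-and-forth system between the two structures viewed as $\CP$-models over $(\{\top,\bot\},\{\top\})$, and applies Theorem~\ref{fixedfinteb&f} directly. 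This buys a cleaner argument: no translation $\phi\mapsto\phi^{\ast}$ is needed, and the rank bookkeeping you flag as the ``main obstacle'' simply does not arise, since Theorem~\ref{fixedfinteb&f} already operates at the level of quantifier depth in the fixed-algebra case. Your translation route is perfectly valid (and your worry about rescaling $p(n)$ is in fact unnecessary, since replacing each $\CL$-connective by its Boolean equivalent does not change quantifier depth), but the paper's use of $\isom^n_f$ is the more direct way to close the loop.
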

\begin{proof}
    By lemmas ~\ref{clastranshomequiv} \& ~\ref{clastransnhom} we have $(A,F,M)\leftrightarrows_p M^{\top}$, $(B,G,N)\leftrightarrows_p N^{\top}$ and $M^{\top}\leftrightarrows^{p(n)} N^{\top}$. Applying Rossman's finite extendability theorem~\cite[Corollary 5.14]{Rossman08} for classical structures $\exists\tilde{M},\tilde{N}$ such that $\tilde{M}\isom^n_f\tilde{N}$, $M^{\top}\leftrightarrows\tilde{M}$ and $N^{\top}\leftrightarrows\tilde{N}$ (as classical structures). Taking composition of the following gives us the first and last equivalence we need: \[(A,F,M)\leftrightarrows_pM^{\top}\leftrightarrows_p(\{\top,\bot\},\{\top\},\tilde{M})\;\;(\{\top,\bot\},\{\top\},\tilde{N})\leftrightarrows_pN^{\top}\leftrightarrows_p(B,G,N).\]
    
    For the last equivalence, viewing $\tilde{M}$ and $\tilde{N}$ as $\CP$-models $(\{\top,\bot\},\{\top\},\tilde{M})$ and $(\{\top,\bot),\{\top\},\tilde{N})$ we have $(\{\top,\bot\},\{\top\},\tilde{M})\isom^n_f(\{\top,\bot\}),\{\top\},\tilde{N})$. Then by theorem ~\ref{fixedfinteb&f} $(\{\top,\bot\},\{\top\},\tilde{M})\equiv_n (\{\top,\bot\},\{\top\},\tilde{N})$. 
\end{proof}

To prove our initial versions of the finite h.p.t.\ we need to get a little deeper into the proof of Rossman's result itself. First we quote the well-known association with finite structures and $\land$-primitive formulas, namely that for any classical finite structure $M$ there is a $\land$-primitive sentence $\theta_M$ such that $qd(\theta_M)=td(M)$ and for any classical structure $N$ $M\rightarrow N$ iff $N\models\theta_M$~\cite[Lemma 2.14]{Rossman08}. Secondly, we utilise the concept of an $n$-core of a classical structure~\cite[Definition 3.10, Lemma 3.11]{Rossman08}. Rossman develops a fair amount of theory around $n$-cores, but for our purposes we just need that for any classical structure $M$ and its $n$-core $Core^n(M)$ the following properties hold:\begin{align*}
    M\leftrightarrows^n Core^n(M)\:\:\: & td(Core^n(M))\leq n,\\
    M\rightarrow^n N \text{ iff } Core^n(M)\rightarrow N &\text{ for any classical structure }N.
    \end{align*}

We use our translated models to prove new versions of the two main components to Rossman's result. The first is a variant of~\cite[Lemma 3.16]{Rossman08}. 
\begin{lemma}\label{hardlemma1}
    Let $P$ and $Q$ be classes of $\CP$-models over interpreting lattices (in the same signature $\CL)$ such that for any pair of $\CP$-models over interpreting lattices $(A,F,M),(B,G,N)$, $(A,F,M)\in P$ and $(A,F,M)\rightarrow^n_p(B,G,N)$ implies $(B,G,N)\in Q$.
    Then there is an e.$\land$.p-sentence of quantifier depth $n$ $\psi$ such that $P\subseteq Mod_{fin}(\psi)\subseteq Q$.
\end{lemma}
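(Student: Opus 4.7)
The plan is to push the problem through the classical translation $(-)^{\top}$ and then apply the key classical tool, namely the canonical $\wedge$-primitive sentence $\theta_S$ of quantifier depth at most $n$ associated to any finite classical structure $S$ with $td(S)\leq n$, which has the property that, for any classical structure $T$, $T\models\theta_S$ iff $S\to T$. Combined with the existence of $n$-cores (recalled just before the lemma), this gives us all the ingredients we need.

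First I would construct the candidate sentence. For each $(A,F,M)\in P$ let $Core^n(M^{\top})$ be the $n$-core of the classical translation $M^{\top}$; this is a finite classical structure with $td(Core^n(M^{\top}))\leq n$. Let $\theta_{(A,F,M)}:=\theta_{Core^n(M^{\top})}$ be the associated $\wedge$-primitive sentence of quantifier depth at most $n$, and set
\[
\psi \;:=\; \bigvee_{(A,F,M)\in P}\theta_{(A,F,M)}.
\]
A priori this is an infinite disjunction, but because the signature $\CP$ is finite and each $n$-core has size bounded in terms of $n$ and $|\CP|$, there are only finitely many $n$-cores up to isomorphism; hence, up to equivalence, $\psi$ is actually a finite disjunction of $\wedge$-primitive sentences of quantifier depth $\leq n$, i.e.\ an e.$\wedge$.p-sentence of quantifier depth $n$.

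Next I would verify $P\subseteq Mod_{fin}(\psi)$. For $(A,F,M)\in P$ we always have $Core^n(M^{\top})\to M^{\top}$ classically, so $M^{\top}\models_{cl}\theta_{(A,F,M)}$; since $\theta_{(A,F,M)}$ is built solely from $\exists$ and $\wedge$ over atoms, the remark after the definition of $M^{\top}$ gives $M^{\top}\models\theta_{(A,F,M)}$ in the many-valued sense as well. Lemma \ref{clastranshomequiv} then transfers this to $(A,F,M)\models\theta_{(A,F,M)}$, so $(A,F,M)\models\psi$. For the other inclusion $Mod_{fin}(\psi)\subseteq Q$: suppose $(B,G,N)\models\psi$. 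Then some disjunct $\theta_{(A,F,M)}$ is true in $(B,G,N)$, and Lemma \ref{clastranshomequiv} together with the remark again yields $N^{\top}\models_{cl}\theta_{(A,F,M)}$, i.e.\ $Core^n(M^{\top})\to N^{\top}$ classically. By the defining property of the $n$-core this means $M^{\top}\to^n_{cl}N^{\top}$, and Lemma \ref{clastransnhom} lifts this to $(A,F,M)\to^n_p(B,G,N)$. The hypothesis on $P$ and $Q$ then forces $(B,G,N)\in Q$.

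The main obstacles I anticipate are bookkeeping rather than conceptual: first, confirming that $\psi$ can be taken to be a \emph{finite} disjunction and hence a genuine e.$\wedge$.p-sentence (which requires invoking the bound on the number of non-isomorphic $n$-cores over a finite relational signature, together with the fact that logically equivalent $\wedge$-primitive sentences of the same quantifier depth may be collapsed); and second, making sure at each transfer step that the sentences in play live in the fragment where classical and many-valued satisfaction agree, so that Lemmas \ref{clastranshomequiv} and \ref{clastransnhom} apply cleanly. Once these are in hand, the lemma follows.
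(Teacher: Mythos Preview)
Your proposal is correct and follows essentially the same route as the paper: pass to the classical translations $M^{\top}$, take their $n$-cores, form the disjunction of the associated $\land$-primitive sentences $\theta_{Core^n(M^{\top})}$, and verify the two inclusions via Lemmas~\ref{clastranshomequiv} and~\ref{clastransnhom}. The only cosmetic difference is that the paper first passes to the finite set $\mathbb{P}$ of $n$-cores and indexes the disjunction over that, whereas you index over $P$ and then argue the disjunction collapses to finitely many terms; your explicit acknowledgment of why this finiteness holds (bounded size of $n$-cores over a finite relational signature) is if anything more careful than the paper's bare assertion.
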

\begin{proof}
    We consider the set $P^{\top}=\{M^{\top}:M\in P\}$ and let $\mathbb{P}$ be the set of $n$-cores of structures in $P^{\top}$. This is finite so let $\{C_n\}_{i=1}^{m}$ be its members and consider the $\land$-primitive formula sentences $\theta_i$ corresponding to each $C_n$. We note that each $\theta_i$ has quantifier depth $n$ and for any classical structure $D$ $C_i\rightarrow D$ iff $D\models_{cl}\theta_i$. We let $\psi:=\biglor_{1\leq i\leq m}\theta_i$. 

    Now consider $(A,F,M)\in P$. $M^{\top}\in P^{\top}$ and $Core^n(M^{\top})\in\mathbb{P}$, i.e.\ $Core^n(M^{\top})=C_i$ for some $i\leq m$. A basic property of $n$-cores is that $Core^n(M^{\top})\rightarrow M^{\top}$~\cite[Definition 3.10]{Rossman08} and so $M^{\top}\models_{cl}\theta_i$ and consequently $M^{\top}\models\theta_i$ and moreover $M^{\top}\models\psi$. Then by lemma ~\ref{clastranshomequiv} $(A,F,M)\models\psi$ and we have $P\subseteq Mod_{fin}(\psi)$

    Let $(B,G,N)$ be a $\CP$-model such that $(B,G,N)\models\psi$. By lemma ~\ref{clastranshomequiv} again $N^{\top}\models\psi$ so  $N^{\top}\models\theta_i$ for some $i\leq m$. Therefore, $N^{\top}\models_{cl}\theta_i$ and $C_i\rightarrow N^{\top}$. Then $C_i\in\mathbb{P}$ so $C_i=Core^n(M^{\top})$ for some $M^{\top}\in P^{\top}$ with $(A,F,M)\in P$. From~\cite[Definition 3.10]{Rossman08} we have $Core^n(M^{\top})\rightarrow N^{\top}$ implies $M^{\top}\rightarrow^nN^{\top}$ and so by lemma ~\ref{clastransnhom} $(A,F,M)\rightarrow^n_p(B,G,N)$. Therefore, by our initial assumption $(B,N)\in Q$ and $Mod_{fin}(\psi)\subseteq Q$ as required.  
\end{proof}

The second main component to Rossman's proof is using finite extendability to check the sufficiency condition just established~\cite[Theorem 5.15]{Rossman08}
\begin{lemma}\label{hardlemma2}
    Let $P$ and $Q$ be classes of $\CP$-models over interpreting lattices (in the same signature $\CL$) and $\phi$ a $\CL,\CP$-sentence with quantifier depth at most $n$ such that for any $\CP$-models $(A,F,M),(B,G,N)$ we have:\begin{itemize}
        \item $(A,F,M)\in P$, $(A,F,M)\rightarrow_p(B,G,N)$ implies $(B,G,N)\models\phi$.
        \item $(A,F,M)\models\phi$, $(A,F,M)\rightarrow_p(B,N)$ implies $(B,G,N)\in Q$. 
    \end{itemize}
    Then $(A,F,M)\in P$ and $(A,F,M)\rightarrow^{p(n)}(B,G,N)$ implies $(B,G,N)\in Q$.
\end{lemma}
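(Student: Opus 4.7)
The strategy is to apply the Finite Proto Extendability Theorem to produce intermediate $\CP$-models $\tilde M, \tilde N$ over the two-element Boolean algebra that sandwich $(A,F,M)$ and $(B,G,N)$ by protomorphisms while being $n$-elementarily equivalent to one another, and then to chain the two bulleted hypotheses through this sandwich.

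Concretely, once the extendability theorem produces
\[(A,F,M)\leftrightarrows_p\tilde M\equiv_n\tilde N\leftrightarrows_p(B,G,N),\]
the argument closes in three short steps. First, combining $(A,F,M)\in P$ with the protomorphism $(A,F,M)\rightarrow_p\tilde M$ and applying the first bullet of the hypothesis yields $\tilde M\models\phi$. Second, since $\phi$ has quantifier depth at most $n$ and $\tilde M\equiv_n\tilde N$---the strong $n$-elementary equivalence delivered by Theorem~\ref{fixedfinteb&f}, which coincides with ordinary $\equiv_n$ over the two-element Boolean algebra since every truth value is either designated or not---I transfer satisfaction to $\tilde N\models\phi$. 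Third, combining $\tilde N\models\phi$ with the protomorphism $\tilde N\rightarrow_p(B,G,N)$ and applying the second bullet gives $(B,G,N)\in Q$.

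The main obstacle is the mismatch between the lemma's one-sided hypothesis $\rightarrow^{p(n)}_p$ and the two-sided $\leftrightarrows^{p(n)}_p$ demanded by the extendability theorem. I plan to bridge this by a brief detour through classical $p(n)$-cores at the translated level: Lemmas~\ref{clastranshomequiv} and~\ref{clastransnhom} transport the hypothesis to $M^{\top}\rightarrow^{p(n)} N^{\top}$, and letting $C$ denote the classical $p(n)$-core of $M^{\top}$, Rossman's theory of cores supplies both $C\leftrightarrows^{p(n)} M^{\top}$ (the back-and-forth input extendability needs) and $C\rightarrow N^{\top}$ (so $C\rightarrow_p(B,G,N)$ through $N^{\top}\leftrightarrows_p(B,G,N)$). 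Applying the extendability theorem to the pair $M^{\top}, C$ produces structures $\tilde M, \tilde C$, and running the three-step sandwich above with $\tilde C$ in the role of $\tilde N$---so that the closing protomorphism is the composite $\tilde C\leftrightarrows_p C\rightarrow_p(B,G,N)$ feeding the second bullet---delivers $(B,G,N)\in Q$ as required.
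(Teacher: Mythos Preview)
Your proposal is correct and matches the paper's own proof essentially step for step: the paper likewise transports the hypothesis to $M^{\top}\rightarrow^{p(n)}N^{\top}$, passes to the $p(n)$-core $C=Core^{p(n)}(M^{\top})$ to recover the two-sided $\leftrightarrows^{p(n)}$ needed for extendability, applies extendability to the pair $M^{\top},C$, and then runs exactly your three-step chain through $\tilde{M^{\top}}\equiv_n\tilde{C}$ and the composite $\tilde{C}\rightarrow_p C\rightarrow_p N^{\top}\leftrightarrows_p(B,G,N)$.
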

\begin{proof}
    Let $(A,F,M)\in P$ and $(A,FM)\rightarrow^{p(n)}_p (B,G,N)$. By lemmas ~\ref{clastranshomequiv} and ~\ref{clastransnhom}, \[(A,F,M)\leftrightarrows_p M^{\top}\rightarrow^{p(n)}_{cl} N^{\top}\leftrightarrows_p(B,G,N).\] We consider $Core^{p(n)}(M^{\top})$, for which $M^{\top}\leftrightarrows^{p(n)}Core^{p(n)}(M^{\top})$ and $Core^{p(n)}(M^{\top})\rightarrow N^{\top}$. Applying finite extendability to $M^{\top}$ and $Core^{p(n)}(M^{\top})$ we obtain $\tilde{M^\top}$ and $\tilde{Core^{p(n)}(M^{\top})}$ such that $\tilde{M^{\top}}\equiv_n\tilde{Core^{p(n)}(M^{\top})}$. Putting this together gives the following diagram:
\[\begin{tikzcd}
                            &[-25pt]                  &[-25pt] \tilde{M^{\top}} &[-25pt] \equiv_n                     &[-25pt] \tilde{Core^{p(n)}(M^{\top})} &[-25pt]               &[-25pt]                    &[-25pt]                    &[-25pt]                             \\[-25pt]
                            &                  & \uparrow\downarrow         &                              & \uparrow\downarrow                      &               &                    &                    &                             \\[-25pt]
(A,F,M) & \leftrightarrows_p & M^{\top}         & \leftrightarrows^{p(n)}_{cl} & Core^{p(n)}(M^{\top})         & \rightarrow_p & N^{\top} & \leftrightarrows_p & (B,G,N)
\end{tikzcd}\]

    Tracing through our diagram we obtain that $(B,G,N)\in Q$:\begin{gather*} (A,F,M)\in P \text{ and }(A,F,M)\rightarrow_p\tilde{M^{\top}} \text{ implies } \tilde{M^{\top}}\models\phi.\\ \tilde{M^{\top}}\models\phi, qd(\phi)\leq n \text{ and } \tilde{M^{\top}}\equiv_n\tilde{Core^{p(n)}(M^{\top})} \text{ implies } \tilde{Core^{p(n)}(M^{\top})}\models \phi.\\
    \tilde{Core^{p(n)}(M^{\top})}\models\phi\text{ and }\tilde{Core^{p(n)}(M^{\top})}\rightarrow_p(B,G,N)\text{ implies }(B,G,N)\in Q.
    \end{gather*}
\end{proof}

We can now give the proofs of our first two preservation theorem; the fixed case as a corollary.
\begin{proof}[Proof of Finite Protomorphism Preservation Theorem]\leavevmode\\
    Our if direction is lemma ~\ref{protoprevecon}, so let $\phi$ be a $\CP$-sentence preserved under protomorphisms. Taking $P=Mod_{fin}(\phi)=Q$ we immediately have for any $\CP$-models $(A,F,M)$ and $(B,G,N)$ $(A,F,M)\in P$ and $(A,F,M)\rightarrow_p(B,G,N)$ implies $(B,G,N)\models \phi$ and $(A,F,M)\models\phi$ and $(A,F,M)\rightarrow_p(B,G,N)$ implies $(B,G,N)\in Q$ so by lemma ~\ref{hardlemma2} for any two $\CP$-models $(A,F,M)\in P$ and $(A,F,M)\rightarrow^{p(n)}(B,G,N)$ implies $(B,G,N)\in Q$. Therefore, by lemma ~\ref{hardlemma1} there is an e.$\land$.p-sentence $\psi$ with quantifier depth $p(n)$ such that $Mod_{fin}(\phi)=P\subseteq Mod_{fin}(\psi)\subseteq Q=Mod_{fin}(\phi)$, i.e.\ $Mod_{fin}(\phi)=Mod_{fin}(\psi)$ as required. 
\end{proof}

\begin{proof}[Proof of Fixed Finite Homomorphism Preservation Theorem]\leavevmode\\
    By fixing an interpreting lattice $(A,F)$ and only considering models over $(A,F)$ all protomorphism are homomorphism when paired with the identity map $id_A$. Then lemma ~\ref{protoprevecon} gives $i.\Rightarrow iii.$, $iii.\Rightarrow ii.$ is immediate from the above comment and $ii.\Rightarrow i.$ is the previous theorem restricted to models over $(A,F)$.  
\end{proof}


\section{Strong Conjunction and Integrality}\label{ResiCon}
At this point we want to make a direct comparison with the preservation theorems for finite models we've just established and the preceding work on such preservation theorems. Recall that in~\cite{DellundeVidal19} the authors give a compactness based proof of a homomorphism preservation theorem for all (infinite and finite) structures interpreted over a fixed finite MTL-chain. Following our proof strategy for the finite case one can give a homomorphism preservation theorem for infinite structures that is compactness free. This is possible because alongside proving classical finite homomorphism preservation Rossman also proved an 'equi-rank' version of the standard classical homomorphism preservation theorem~\cite[Theorem 4.12]{Rossman08} and accordingly we can utilise this in exactly the same way we did the finite version. An important difference is that if we take our models over arbitrary interpreting lattices as we did in the finite case we lose that $\exists$ is witnessed, i.e. for a $\CP$-formula $\exists x\phi(x)$ it is possible that $||\exists x\phi(x)||^M\not=||\phi(m)||^M$ for all $m\in M$. To recover this we insist that we only consider models over finite interpreting lattices; our ability to work with possibly infinite structures comes at the cost of reducing the class of algebras we work over\footnote{This restriction and the reasoning for it is also recognised by Dellunde and Vidal in their work ~\cite[pg 5]{DellundeVidal19}}. The avoidance of compactness is not just an interesting methodological feature, it means that we obtain a theorem that still holds for our quite general interpreting lattices even in the absence of a compactness theorem for these structures. Accordingly, we get the following two theorems.
\begin{theorem}[Protomorphism Preservation Theorem]\leavevmode\\
Let $K$ be the class of finite interpreting lattices. Let $\CP$ be a predicate language and $\phi$ a consistent $\CP$-sentence over $K$, i.e $Mod^K(\phi)\not=\varnothing$.

Then $\phi$ is equivalent in the finite to an e.$\land$.p-sentence $\psi$ iff $\phi$ is preserved under protomorphisms. That is there exists an e.$\land$.p-sentence $\psi: nr(\phi)=nr(\psi)$ and $Mod^K(\phi)=Mod^K(\psi)$ iff $Mod^K(\phi)$ is closed under $\rightarrow_p$.
\end{theorem}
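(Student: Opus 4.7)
The plan is to mirror the proof strategy of Section~\ref{FinHPT} as closely as possible, substituting Rossman's equi-rank classical homomorphism preservation theorem~\cite[Theorem 4.12]{Rossman08} (and the underlying infinite extendability result it relies on) for the finite extendability theorem used in the finite case. The translation $(A,F,M)\mapsto M^{\top}$ remains the central bridge to the classical setting.

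First I would verify that the foundational lemmas of Sections~\ref{EPSent} and \ref{FinHPT} survive the passage from finite models to possibly infinite ones, provided we restrict to finite interpreting lattices $K$. The critical property at stake is witnessing of the existential quantifier: for a formula $\exists x\phi(x)$ on a model $(A,F,M)$ with $(A,F)\in K$, the supremum $\sup\{||\phi||^M_{v_{x=m}}:m\in M\}$ is achieved by some $m\in M$ even when $M$ is infinite, because $A$ is finite. With witnessing in hand, Lemma~\ref{easylemma} and consequently Lemmas~\ref{protoprevecon}, \ref{clastranshomequiv}, and \ref{clastransnhom} carry over essentially verbatim. This already delivers the easy direction, namely that any e.$\land$.p-sentence is preserved under protomorphisms on $Mod^K$.

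For the hard direction I would begin with an infinite analog of the Finite Proto Extendability Theorem: given $(A,F,M)$ and $(B,G,N)$ in $Mod^K$ with $(A,F,M)\leftrightarrows^{p(n)}_p(B,G,N)$, there exist structures $\tilde M, \tilde N$ over the two-element Boolean algebra satisfying $(A,F,M)\leftrightarrows_p\tilde M\equiv_n\tilde N\leftrightarrows_p(B,G,N)$. The proof runs as in the finite case but invokes Rossman's infinite extendability (which underlies his equi-rank h.p.t.) in place of finite extendability. Lemma~\ref{hardlemma2} then adapts directly, with every appeal to finite extendability replaced by its infinite analog.

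The main obstacle is the analog of Lemma~\ref{hardlemma1}, whose finite-case proof relies on there being only finitely many $n$-cores of bounded tree-depth --- something that fails for infinite structures. In place of an explicit enumeration of $n$-cores I would invoke the structural content of Rossman's equi-rank argument on the classical side: the collection $\{M^{\top}:(A,F,M)\in P\}$ is upward-closed under classical $n$-homomorphism, and Rossman's compactness-free machinery produces a classical e.$\land$.p-sentence of the appropriate rank whose models are exactly this upward closure. Pulling this back through the translation yields the required $\psi$ with $Mod^K(\phi)=Mod^K(\psi)$, and the nested rank condition $nr(\phi)=nr(\psi)$ follows from tracking the equi-rank feature across the translation, once one accounts for the fixed bookkeeping factor relating nested rank on the many-valued side to classical quantifier depth on the classical side.
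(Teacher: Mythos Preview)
Your overall strategy matches the paper's: the theorem is obtained exactly as in Section~\ref{FinHPT}, replacing Rossman's finite extendability by the infinite version underlying his equi-rank theorem~\cite[Theorem~4.12]{Rossman08}, and you correctly isolate the role of finiteness of the interpreting lattice in recovering witnessing for $\exists$ over possibly infinite domains, so that Lemmas~\ref{easylemma}, \ref{protoprevecon}, \ref{clastranshomequiv}, and \ref{clastransnhom} all survive.

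However, the obstacle you identify in Lemma~\ref{hardlemma1} rests on a misconception. The finiteness of the set $\mathbb{P}$ of $n$-cores (up to isomorphism) does \emph{not} fail when the ambient structures are allowed to be infinite. In a finite relational signature, any structure of tree-depth at most $n$ has connected components of uniformly bounded size; a core cannot contain two isomorphic connected components (else one would retract onto the other), so a core of tree-depth at most $n$ is itself a finite structure, and there are only finitely many such cores up to isomorphism. This is precisely the observation that makes Rossman's proof of the equi-rank classical h.p.t.\ compactness-free in the first place. Consequently Lemma~\ref{hardlemma1} carries over to the infinite setting verbatim, and no workaround is needed. Your proposed substitute --- invoking ``Rossman's compactness-free machinery'' to produce $\psi$ from the upward closure of $\{M^{\top}:(A,F,M)\in P\}$ --- is, once unpacked, nothing other than the classical form of Lemma~\ref{hardlemma1} (i.e.\ Rossman's Lemma~3.16), which itself relies on exactly this finiteness of $n$-cores. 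So once the misconception is removed, your argument and the paper's intended one coincide.
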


\begin{theorem}[Fixed Homomorphism Preservation Theorem]\leavevmode\\
Let $\CP$ be a predicate language, $A$ a finite interpreting lattice and $\phi$ a consistent $\CP$-sentence over $A$ (i.e.\ $Mod^A(\phi)\not=\varnothing)$. The following are equivalent:\begin{enumerate}[label=\roman*.]
    \item $\phi$ is equivalent over $A$ to an e.$\land$.p-sentence $\psi$ with the same quantifier depth. i.e.\ there is an e.$\land$.p-sentence $\psi:qd(\phi)=qd(\psi)$ and  $Mod^A(\phi)=Mod^A(\psi)$.
    \item $\phi$ is preserved under protomorphisms on $A$, i.e.\ $Mod^A(\phi)$ is closed under $\rightarrow_p$.
    \item $\phi$ is preserved under homomorphisms on $A$, i.e.\ $Mod^K(\phi)$ is closed under $\rightarrow$.
\end{enumerate}
\end{theorem}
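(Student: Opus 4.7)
The plan is to obtain this theorem as a direct corollary of the Protomorphism Preservation Theorem stated just above, mirroring how the Fixed Finite Homomorphism Preservation Theorem was obtained from the Finite Protomorphism Preservation Theorem. The one substantive subtlety is that the PPT produces an equivalent e.$\land$.p-sentence with matching nested rank, whereas here we need matching \emph{quantifier depth}; this will be resolved by appealing to the fixed-algebra refinement of back-and-forth (Definition~\ref{fixedb&fsys}, Theorem~\ref{fixedfinteb&f}) in place of its mixed counterpart.

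First I would settle the easy directions. For (i) $\Rightarrow$ (iii), I would extend Lemma~\ref{protoprevecon} to infinite-domain models over $A$: since $A$ is finite, any supremum occurring in the interpretation of $\exists$ ranges over a finite subset of $A$ and is therefore attained, so the witnessing argument of Lemma~\ref{easylemma} goes through verbatim and every e.$\land$.p-sentence is preserved under protomorphisms, hence also under homomorphisms. For (iii) $\Rightarrow$ (ii), observe that over the fixed algebra $A$ any protomorphism $g \colon M \to N$ is paired with the identity $id_A$ (a $\CL$-homomorphism) to form the homomorphism $(id_A, g)$, so closure under $\rightarrow$ implies closure under $\rightarrow_p$.

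The main content is (ii) $\Rightarrow$ (i). Rather than invoking the PPT as a black box (which would only yield matching nested rank), I would replay its proof specialised to the singleton class $\{A\}$. The classical translation $M \mapsto M^{\top}$ still embeds $\CP$-models over $A$ into classical $\CP$-models, and with $A$ finite the analogues of Lemma~\ref{clastranshomequiv} and Lemma~\ref{clastransnhom} continue to hold. Analogues of Lemma~\ref{hardlemma1} and Lemma~\ref{hardlemma2} would then be assembled using \emph{simple} partial isomorphisms from Definition~\ref{fixedb&fsys}, and using Rossman's equi-rank classical homomorphism preservation theorem~\cite[Theorem 4.12]{Rossman08} in place of his finite extendability theorem. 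Setting $P = Q = Mod^A(\phi)$ and composing the substituted lemmas exactly as in the proof of the Finite Protomorphism Preservation Theorem then produces an e.$\land$.p-sentence $\psi$ with $qd(\psi) = qd(\phi)$ and $Mod^A(\phi) = Mod^A(\psi)$.

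The main obstacle I anticipate is verifying that Rossman's equi-rank theorem transfers through the classical translation without inflating quantifier depth, and in particular that the quantifier-depth-$n$ $\land$-primitive formulas associated to the $n$-cores of each $M^{\top}$ (for $M \in Mod^A(\phi)$) can be assembled into an e.$\land$.p-sentence of quantifier depth at most $qd(\phi)$. The fact that the fixed-algebra case allows elementary equivalence to be captured by classical quantifier depth (Theorem~\ref{fixedfinteb&f}) rather than by nested rank is precisely what enables the equi-rank guarantee, so this reduction from nested rank to quantifier depth is the crux of the argument.
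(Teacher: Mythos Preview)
Your proposal is correct and matches the paper's own (sketched) argument: the paper does not prove this theorem explicitly but indicates that one reruns the finite-case strategy with Rossman's equi-rank theorem~\cite[Theorem~4.12]{Rossman08} in place of finite extendability, using finiteness of $A$ to secure existential witnessing in possibly infinite models, and then derives the fixed version from the protomorphism version exactly as in the finite case. Your treatment of the easy directions and your plan to replay Lemmas~\ref{hardlemma1} and~\ref{hardlemma2} in the infinite setting are in line with this.

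One small correction to your diagnosis of the quantifier-depth issue: the guarantee $qd(\psi)=qd(\phi)$ does not come from switching from the mixed back-and-forth (Theorem~\ref{mixedfintieb&f}) to the fixed one (Theorem~\ref{fixedfinteb&f}). The finite proof already applied Theorem~\ref{fixedfinteb&f} to the Boolean translations $\tilde{M^\top}$, and the infinite argument does the same; nothing about fixing $A$ changes which back-and-forth theorem is invoked. What actually produces the equi-rank bound is that Rossman's infinite extendability result (underlying his Theorem~4.12) avoids the $p(n)$ blowup of the finite version, so in the analogue of Lemma~\ref{hardlemma2} the relevant elementary-equivalence level is $n=qd(\phi)$ rather than $p(n)$, and Lemma~\ref{hardlemma1} then delivers $\psi$ with $qd(\psi)=n$.
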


These theorems are not strictly a generalisation of the one presented by Dellunde and Vidal~\cite[Theorem 11.3]{DellundeVidal19} because theirs incorporates the residuated conjunction, establishing a homomorphism preservation theorem relative to existential-positive sentences as defined in definition ~\ref{epsentences}. As it stands, our result seems to mostly say that the 'classical part' of residuated lattices behave as we would expect, which whilst interesting is perhaps less compelling than an understanding of the residuated connectives. In the general setting we've worked in so far we have already seen why we must exclude residuated conjunction. The critical difference relates to the property of \emph{integrality} for residuated lattice. In this context we can recover good behaviour for $\&$ with respect to morphisms (either proto- or homo-). We say that a residuated interpreting lattice is \textit{integral} iff the unit of the residuated conjunction $1$ is the top element in the lattice, i.e.\ $\forall a\in A$ $a\leq 1$. Consequently, the filter $F$ defining truth becomes just $\{1\}$. The significant consequence of integrality is that it implies that the residuated conjunction behaves just as lattice conjunction with respect to truth, this lets us establish that e.$\&$.p-sentences and consequently e.p-sentences are preserved under $\rightarrow_p$.
\begin{lemma}\label{resiconjmetalogic}
    Let $A$ be an integral residuated interpreting lattice. $\forall a,b\in A$ $a\amp b=1$ iff $a=1$ and $b=1$.
\end{lemma}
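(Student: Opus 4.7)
The claim has two directions, and they are of very different difficulties. The right-to-left direction is essentially definitional: if $a=1$ and $b=1$, then $a\amp b=1\amp 1=1$ because $1$ is by definition the unit of the residuated conjunction $\amp$. So the only content is the left-to-right direction.

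For the left-to-right direction, the plan is to combine two standard facts about residuated lattices with the integrality hypothesis. First, $\amp$ is monotone in each argument (this is immediate from the residuation $a\amp b\leq c$ iff $b\leq a\rightarrow c$, since if $b\leq b'$ then from $a\amp b'\leq a\amp b'$ one gets $b'\leq a\rightarrow(a\amp b')$, and hence $b\leq a\rightarrow(a\amp b')$, i.e.\ $a\amp b\leq a\amp b'$, and symmetrically on the left). Second, integrality says that $1$ is the top of the lattice, so for every $x\in A$ we have $x\leq 1$.

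Combining these, for arbitrary $a,b\in A$ we have $a\leq 1$ and $b\leq 1$, so by monotonicity
\[a\amp b\leq 1\amp b=b\qquad\text{and}\qquad a\amp b\leq a\amp 1=a.\]
Now assume $a\amp b=1$. Substituting into the two inequalities above yields $1\leq a$ and $1\leq b$. Since integrality also gives $a\leq 1$ and $b\leq 1$, antisymmetry of the lattice order forces $a=1$ and $b=1$, as required.

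There is no real obstacle here; the only thing to be careful about is making sure the argument uses only the properties genuinely available in our setting, namely residuation (for monotonicity of $\amp$), the lattice order, and integrality (so that $1$ is the top element and hence $F=\{1\}$ coincides with the principal upset of $1$). No finiteness, prelinearity, or further structural assumption on $A$ is needed.
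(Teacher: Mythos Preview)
Your proof is correct and follows essentially the same route as the paper: use integrality and monotonicity of $\amp$ to obtain $a\amp b\leq a$ and $a\amp b\leq b$, then conclude from $a\amp b=1$ that $a=b=1$. The only difference is that you spell out why $\amp$ is monotone via residuation, whereas the paper simply cites monotonicity.
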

\begin{proof}
    Follows easily by monotonicty of $\&$. $a\leq 1$ implies $a\amp b\leq 1\amp b=b$ and symmetrically $a\amp b\leq a$. Then, $1=a\amp b\leq a$ implies $a=1$ and similarly $b=1$. The reverse is trivial.  
\end{proof}

\begin{lemma}
    Let $\CP$-predicate language and $(A,M)$ be a $\CP$-model over an integral residuated interpreting lattice $A$ and $\psi=\biglor\limits_{i=1}^n\theta_i$ an e.p-sentence. $(A,M)\models\psi$ iff $\exists 1\leq i\leq n:\exists\bar{m}\in M:$ for all atomic subformula $R_{j,k}\subseteq \theta_i$ $R^M_{j,k}(\bar{m_{j,k}})=1$ where $\bar{m_{j,k}}$ is the subsequence of $\bar{m}$ corresponding to the subsequence $\bar{x_{j,k}}$ of $\bar{x}$. 

    Moreover, $Mod_{fin}(\psi)$ is closed under $\rightarrow_p$. 
\end{lemma}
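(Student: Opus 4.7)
The approach is to imitate the proof of Lemma~\ref{easylemma}, adding one extra ingredient: in an integral residuated interpreting lattice the residuated conjunction $\amp$ behaves identically to the lattice conjunction $\land$ with respect to the filter $F=\{1\}$. Lemma~\ref{resiconjmetalogic} supplies the binary case, and an immediate induction on arity extends this to the iterated form, namely $\bigodot_{k=1}^m a_k = 1$ iff $a_k=1$ for every $k$. This is the only new tool required.

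With this in hand, the characterization will fall out by chaining equivalences exactly as in Lemma~\ref{easylemma}. Starting from $||\psi||^M\in F=\{1\}$, primeness of $F$ selects a disjunct $\theta_i$; finiteness of $M$ turns the leading $\exists\bar{x}$ into a finite lattice-supremum, which primeness again reduces to a single witness tuple $\bar{m}$; the filter behaviour of $\bigland$ from Definition~\ref{def:interplat} pushes the conclusion into each lattice-conjunct; and finally the extended version of Lemma~\ref{resiconjmetalogic} pushes it further into each atomic factor $R_{j,k}(\bar{m}_{j,k})$. Reading the chain in reverse gives the converse direction, so all the equivalences are actually biconditionals.

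For the moreover, the argument mirrors the proof of Lemma~\ref{protoprevecon}. Suppose $(A,M)\models\psi$ and $g\colon(A,M)\to_p(B,N)$ is a protomorphism, where $(B,N)$ is tacitly taken over an integral residuated interpreting lattice, the ambient class of this section. The forward direction of the characterization, applied to $(A,M)$, selects $i$ and $\bar{m}$ such that $R^M_{j,k}(\bar{m}_{j,k})=1\in F$ for every atomic subformula of $\theta_i$. The protomorphism condition carries each such value into $G=\{1\}$, giving $R^N_{j,k}(g(\bar{m}_{j,k}))=1$. The backward direction of the characterization applied to $(B,N)$ then yields $(B,N)\models\psi$.

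The one point that deserves flagging — more a clarification of the ambient hypothesis than a genuine obstacle — is that this last step really does use integrality of the \emph{codomain}. If $B$ were a residuated interpreting lattice that was not integral, the condition $R^N_{j,k}(g(\bar{m}_{j,k}))\in G$ would not force these atomic values to be top, and so the iterated $\bigodot$ and $\bigland$ could not be read off componentwise with respect to $G$. Integrality is precisely what collapses this gap and makes the closure under $\to_p$ straightforward.
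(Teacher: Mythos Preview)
Your proposal is correct and follows essentially the same route as the paper: both reduce to the argument of Lemma~\ref{easylemma} to unwind $\lor$, $\exists$ and $\bigland$, then invoke Lemma~\ref{resiconjmetalogic} (extended by induction to the iterated $\bigodot$) for the residuated conjunction, and both handle the moreover by pushing atomic truth through the protomorphism and applying the first part in the codomain. Your explicit remark that integrality of the codomain is what makes the closure step go through is a useful clarification the paper leaves implicit.
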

\begin{proof}
    Following the proof of lemma ~\ref{easylemma} we obtain that $(A,M)\models\psi$ iff $\exists 1\leq i\leq n:\exists\bar{m}\in M:\forall 1\leq j\leq n_1$ $||\bigodot\limits_{k=1}^{n_2} R_{j,k}(\bar{x_{j,k}})||_{v[\bar{x}\mapsto\bar{m}]}=1$. Then by the previous lemma, the latter holds iff $\exists 1\leq i\leq n:\exists\bar{m}\in M:$ for all atomic subformula $R_{j,k}\subseteq\theta_i$ $||R_{j,k}(\bar{x_{j,k}})||_{v[\bar{x}\mapsto\bar{x}]}=1$ iff $\exists 1\leq i\leq n:\exists\bar{m}\in M:$ for all atomic subformula $R_{j,k}\subseteq\theta_i$ $R^M_{j,k}(\bar{m_{j,k}})=1$.

    For the moreover, letting $(A,M)\models\psi$ and $(A,M)\rightarrow_p(B,N)$ where $A,B$ are integral residuated interpreting lattice. We have that $\exists 1\leq i\leq n:\exists\bar{m}\in M:$ for all atomic subformula $R_{j,k}\subseteq\theta_i$ $R^M_{j,k}(\bar{m_{j,k}})=1$. Letting $g\colon (A,M)\rightarrow (B,N)$ be a protomorphism from $(A,M)$ to $(B,N)$, by the definition of protomorphisms $R^N_{j,k}(g(\bar{m_{j,k}}))$, so taking $g(\bar{m})$ and $i$ as witness we have by our previous part that $(B,N)\models\psi$. 
\end{proof}

With this recovered we can easily prove preservation theorem extended to cover e.p-sentences. 
\begin{theorem}[Finite Integral Protomorphism Preservation Theorem]\leavevmode
    Let $K$ be the class of integral residuated interpreting lattices. Let $\CP$ be a predicate language and $\phi$ a consistent $\CP$-sentence in the finite (i.e.\ $Mod^K_{fin}(\phi)\not=\varnothing)$. Then $\phi$ is equivalent over $K$ in the finite to an e.p-sentence $\psi$ iff $\phi$ is preserved under protomorphisms over $K$. That is there exists an e.p-sentence $\psi:Mod^K_{fin}(\phi)=Mod_{fin}(\psi)$ iff $Mod^K_{fin}(\phi)$ is closed under $\rightarrow_p$.

    Moreover, if $A$ is an integral residuated interpreting lattice and $\phi$ a consistent $\CP$-sentence over $A$ (i.e.\ $Mod^{A}_{fin}(\phi)\not=\varnothing$) then the following are equivalent:\begin{enumerate}[label=\roman*.]
    \item $\phi$ is equivalent over $A$ in the finite to an e.p-sentence $\psi$.
    \item $\phi$ is preserved under protomorphisms on $A$, i.e.\ $Mod^A_{fin}(\phi)$ is closed under $\rightarrow_p$.
    \item $\phi$ is preserved under homomorphisms on $A$, i.e.\ $Mod^A_{fin}(\phi)$ is closed under $\rightarrow$.
    \end{enumerate}
\end{theorem}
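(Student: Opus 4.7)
The ``if'' direction of the first assertion is precisely the moreover of the preceding lemma. For the converse, I plan to re-run the argument of the Finite Protomorphism Preservation Theorem verbatim, restricted to the subclass $K$ of integral residuated interpreting lattices. The observation legitimising this restriction is that the two-element Boolean algebra $(\mathbf{2},\{\top\})$ -- into which the translation $M\mapsto M^{\top}$ lands -- is itself an integral residuated interpreting lattice, so $M^{\top}$ never exits $K$. Consequently Lemmas~\ref{clastranshomequiv} and~\ref{clastransnhom} restrict unchanged, and Lemmas~\ref{hardlemma1} and~\ref{hardlemma2} apply with $P,Q$ classes of $\CP$-models whose algebras lie in $K$: their proofs only invoke the $\top$-translation, Rossman's classical core and extendability machinery, and the transfer of protomorphisms along those translations, none of which involves $\&$. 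Feeding $P=Q=Mod^K_{fin}(\phi)$ through this machinery yields an e.$\land$.p-sentence $\psi$ with $Mod^K_{fin}(\phi)=Mod^K_{fin}(\psi)$, and every e.$\land$.p-sentence is, by Definition~\ref{epsentences}, a special case of an e.p-sentence (take each inner $\&$-conjunction to be a singleton), completing the first assertion.

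For the fixed-algebra trichotomy: (i)$\Rightarrow$(iii) follows from the moreover of the preceding lemma together with the fact that the domain component of any homomorphism is, by Definition~\ref{protomorph}, a protomorphism. (iii)$\Rightarrow$(ii) is immediate because on a single $A$ any protomorphism $g$ pairs with $id_A$ to give a homomorphism $(id_A,g)$, so closure of $Mod^A_{fin}(\phi)$ under $\rightarrow$ forces closure under $\rightarrow_p$. Finally (ii)$\Rightarrow$(i) follows by the specialisation of the first assertion to models over $A$, run in parallel with the transcription above: classical structures encountered along the way are handled directly as members of $Mod^{(\mathbf{2},\{\top\})}_{fin}$, while the final e.$\land$.p-sentence lives equally in $Mod^A_{fin}$ because its validity is detected atomically and the $\top$-translation preserves the relevant atomic facts.

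The main obstacle -- and the reason the whole strategy succeeds at all -- lies in the compatibility of the $\top$-translation with e.p-sentences rather than only e.$\land$.p-sentences. Integrality is exactly what secures this compatibility: by Lemma~\ref{resiconjmetalogic} the identity $a\&b=1 \Leftrightarrow a=1 \text{ and } b=1$ collapses the behaviour of $\&$ at the filter $F=\{1\}$ onto that of $\wedge$, so the equivalence $(A,M)\models\psi \Leftrightarrow M^{\top}\models\psi$ extends from e.$\land$.p to e.p-sentences. Once this collapse is recognised, the remainder of the proof is a direct transcription of the material in Section~\ref{FinHPT}.
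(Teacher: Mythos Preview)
Your proof is correct and follows the paper's own approach: invoke the preceding lemma for the ``only if'' direction, then appeal to the Finite Protomorphism Preservation Theorem (restricted to $K$) to produce an e.$\land$.p-sentence, which is trivially an e.p-sentence; the fixed-algebra trichotomy is handled identically. You are in fact more careful than the paper in making explicit that $(\mathbf{2},\{\top\})\in K$, which is what legitimises restricting Lemmas~\ref{hardlemma1} and~\ref{hardlemma2} to $K$. One small point: your final paragraph slightly misplaces the role of integrality---the $\top$-translation machinery never needs compatibility with e.p-sentences, since the ``if'' direction only ever outputs an e.$\land$.p-sentence; integrality is used solely in the ``only if'' direction (via the preceding lemma) to guarantee that e.p-sentences are preserved under $\rightarrow_p$.
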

\begin{proof}
    The only if direction is just the last claim of our previous lemma. For the if direction by the finite p.p.t.\ theorem $\phi$ is equivalent to an e.$\land$.p-sentence $\psi$ which is of course also an e.p-sentence. The moreover follows similarly from the fixed finite h.p.t.\ theorem and the previous lemma upon noting as usual that any protomorphism between models defined over $A$ is automatically a homomorphism when paired with the identity map. 
\end{proof}

This additional restriction also gives the direct comparison to the work done by Dellunde and Vidal. MTL-chains are integral residuated interpreting lattices so the preceding theorem specialises to give a direct answer to the status of finite preservation they asked. Additionally, we can combine the integral restriction with our prior 'equi-rank' protomorphism preservation theorem to obtain an equivalent theorem for (possibly infinite) models defined over a finite integral residuated interpreting lattice.

The restriction to integral algebras can be replicated for an alternative treatment of our problem inspired by the framework for VCSP by Bistarelli, Montanari and Rossi in~\cite{BistarelliMontanariRossi97}. Their work provides the most general treatment of VCSPs and takes what are known as c-semirings as their valuation structure. 
\begin{definition}
    A \textit{c-semiring} is tuple algebra $A=\langle A, +,\times,0,1\rangle$ such that\begin{itemize}
        \item $+\colon\mathcal{P}(A)\rightarrow A$ is defined over (possibly infinite) sets of elements of $A$ where \begin{itemize}
            \item $+(\{a\})=a$,
            \item $+(\varnothing)=0$ $+(A)=1$,
            \item $+(\bigcup_{i\in S}A_i)=+(\bigcup\limits_{i\in S}+(A_i))$
        \end{itemize}
        \item $\times$ is a binary operation that is associative and commutative with unit element $1$ and absorption element $0$.
        \item $\times$ distributives over $+$, that is for any $a\in A$ and $B\subseteq A$ $a\times +(B)=+(\{a\times b\in A:b\in B\})$.
    \end{itemize}
\end{definition}
Importantly, c-semirings are bounded complete lattices where we take $+$ as the generalised \textit{infinite} disjunction with $1$ and $0$ acting as the top and bottom element of the lattice. Strictly speaking they are not interpreting lattices for any choice of $F$ as the $\times$ operation does not capture lattice conjunction. However, $\times$ is monotone with respect to the resulting lattice order and its unit element $1$ is the top element with respect to the lattice order. Together this implies $\times$ is \emph{intensive} in c-semirings i.e. $a\times b\leq a$ and $a\times b\leq b$ for all $a,b\in A$) that together with integrality means we have the nice behaviour of $\times$ when taking $F=\{1\}$. In general, the unit $1$ is not prime for the additive operator $+$ in c-semirings, by in practice all the major VCSP formalisms make use of specific c-semirings for which this the case\footnote{For example, all three of the more specific contexts consider in~\cite[Section 6]{BistarelliMontanariRossi97}.} Taking all this together, we have an equivalent to lemma~\ref{resiconjmetalogic} for the c-semirings of interest. Thus, one can rerun our entire proof but working with existential-$\&$-sentences as defined in definition ~\ref{epsentences} and replacing $\&$ by $\land$ to define the classical counterparts yielding:
\begin{theorem}[Finite c-semiring Protomorphism Theorem]\leavevmode\\
    Let $K$ the class of c-semirings such that $A\in K$ $\forall a,b\in A$ $a+b=1$ iff $a=1$ or $b=1$. Let $\CP$ be a predicate language and $\phi$ a consistent $\CP$-sentence in the finite over $K$ (i.e.\ $Mod^K_{fin}(\phi)\not=\varnothing)$. Then $\phi$ is equivalent over $K$ in the finite to an e.$\&$.p-sentence $\psi$ iff $\phi$ is preserved under protomorphisms over $K$. That is there exists an e.$\&$.p-sentence $\psi:Mod^K_{fin}(\phi)=Mod^K_{fin}(\psi)$ iff $Mod^K_{fin}(\phi)$ is closed under $\rightarrow_p$.

    Moreover, if $A\in K$ and $\phi$ a consistent $\CP$-sentence over $A$ (i.e.\ $Mod^{A}_{fin}(\phi)\not=\varnothing$) then the following are equivalent:\begin{enumerate}[label=\roman*.]
    \item $\phi$ is equivalent over $A$ in the finite to an e.$\&$.p-sentence $\psi$.
    \item $\phi$ is preserved under protomorphisms on $A$, i.e.\ $Mod^A_{fin}(\phi)$ is closed under $\rightarrow_p$.
    \item $\phi$ is preserved under homomorphisms on $A$, i.e.\ $Mod^A_{fin}(\phi)$ is closed under $\rightarrow$.
    \end{enumerate}
\end{theorem}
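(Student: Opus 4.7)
The plan is to mirror the proof given for integral residuated interpreting lattices, replacing $\land$ with $\times$ and $F$ with $\{1\}$ throughout, and exploiting the three properties of the c-semirings in $K$ noted immediately before the theorem: integrality (so that $1$ is the lattice top), intensivity of $\times$ (so that $a\times b\leq a$ and $a\times b\leq b$), and primeness of $\{1\}$ with respect to $+$.

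First I would prove the c-semiring analogue of Lemma~\ref{resiconjmetalogic}: for $A\in K$ and $a,b\in A$, one has $a\times b=1$ iff $a=1$ and $b=1$. The backward direction is immediate since $1$ is the unit of $\times$, and the forward direction follows from intensivity, because $a\times b\leq a$ and $a\times b\leq b$ force both factors to equal the top $1$. Combined with primeness of $\{1\}$ under $+$ (which plays the role of $\lor$) and the fact that $+$ realises $\exists$ as a lattice supremum, this yields the c-semiring analogue of Lemma~\ref{easylemma}: for a finite $\CP$-model $(A,\{1\},M)$ with $A\in K$ and any e.$\&$.p-sentence $\psi=\biglor_{i=1}^{n}\exists\bar{x}\bigodot_{j=1}^{m_i}R_{i,j}(\bar{x}_{i,j})$, $(A,\{1\},M)\models\psi$ iff there exist $i$ and $\bar{m}\in M$ with $R^M_{i,j}(\bar{m}_{i,j})=1$ for every atomic conjunct. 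The easy (only-if) direction of the theorem then follows exactly as in Lemma~\ref{protoprevecon} by transporting these $1$-valuations along the protomorphism.

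For the hard direction, I would reuse the classical-counterpart translation of Section~\ref{FinHPT}. Given $(A,\{1\},M)$ with $A\in K$, define $M^{\top}$ by $R^{M^{\top}}(\bar{m})=\top$ iff $R^M(\bar{m})=1$. The atomic characterisation above shows that for every e.$\&$.p-sentence $\psi$ we have $(A,\{1\},M)\models\psi$ iff $M^{\top}\models\psi^{\mathrm{cl}}$, where $\psi^{\mathrm{cl}}$ is obtained by replacing $\&$ with $\land$. Likewise, $g\colon(A,\{1\},M)\rightarrow_p(B,\{1\},N)$ holds iff $g\colon M^{\top}\rightarrow N^{\top}$ is a classical homomorphism. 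Consequently the analogues of Lemmas~\ref{clastranshomequiv} and \ref{clastransnhom} hold verbatim, the Finite Proto Extendability Theorem transfers without change, and Lemmas~\ref{hardlemma1} and \ref{hardlemma2} go through after the purely syntactic replacement of $\land$ by $\&$. This produces an e.$\&$.p-sentence $\psi$ of quantifier depth at most $p(n)$ whose class of finite $K$-models coincides with $Mod^K_{fin}(\phi)$. The ``moreover'' part for a fixed $A\in K$ follows from the observation that any protomorphism over a fixed algebra is a homomorphism when paired with $id_A$, so conditions (i), (ii) and (iii) collapse.

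The principal obstacle is conceptual rather than technical: c-semirings are \emph{not} interpreting lattices in the strict sense of Definition~\ref{def:interplat}, because $\times$ is not the lattice meet. What saves the argument is that, for $A\in K$ with $F=\{1\}$, the filter-membership behaviour of $\times$ is forced to coincide with what the proof of Lemma~\ref{easylemma} actually uses about $\land$. One must therefore check carefully that every appeal to $\land$ in Section~\ref{FinHPT} depends only on this filter-membership behaviour (and never on idempotence, absorption, or the equality $a\land a=a$), so that the substitution $\land\leadsto\times$ is genuinely valid at every step. Once this is verified, the translation to classical counterparts and the subsequent invocation of Rossman's finite extendability result proceed without any further modification.
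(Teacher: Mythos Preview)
Your proposal is correct and follows essentially the same approach as the paper, which simply states that one can ``rerun our entire proof but working with existential-$\&$-sentences \ldots\ and replacing $\&$ by $\land$ to define the classical counterparts.'' You have in fact spelled out more detail than the paper itself provides, including the explicit analogue of Lemma~\ref{resiconjmetalogic} via intensivity and the syntactic translation $\psi\mapsto\psi^{\mathrm{cl}}$, both of which the paper leaves implicit.
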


Given this result for c-semirings (albeit a restricted class of them) it is natural to consider the case for semiring semantics. In example ~\ref{3maincases} we observed that the min-max semiring is an interpreting lattice when paired with any upset and so our main theorem ~\ref{FinHPT} applies directly. Slightly more generally, for any lattice semiring provided an equivalent result to lemma ~\ref{resiconjmetalogic} holds relative to an appropriate choice of $F$, we could in a similar manner to the c-semiring case run the proof strategy to recover a particular preservation theorem. Beyond this however our proof strategy breaks down due to the differing interpretation of the quantifiers $\exists$ and $\forall$ in semiring semantics. As noted in remarks ~\ref{quantifierremark}, in semiring semantics the quantifiers are interpreted as generalisations of the disjunction and conjunction connectives $+$ and $\cdot$ respectively. When outside of lattice semirings this divergence has significant implications for basic model theoretic results. In particular, as discussed in ~\cite{BrinkeGradelMrkonjic23} the status of back-and-forth systems for semirings is quite different with the equivalent result to theorem ~\ref{fixedfinteb&f} failing in most cases.

\section{Conclusions and Further Study}\label{Conclusion}
In this paper we have begun the investigation on preservation theorems for finite many-valued models. Inspired by the successful generalisation of the homomorphism preservation theorem into first-order MTL by Dellunde and Vidal and Rossman's recovery of the same theorem in classical finite models, we utilised the latter's result to identify and prove the appropriate version of the theorem in the many-valued setting - our finite protomorphism preservation theorem. There remains a lot of work to be done on the topic of many-valued preservation theorems. In the infinite setting, the \L os--Tarski theorem (linking universal sentences with strong embeddings) and Chang--\L o\'{s}--Suszko theorem (linking $\forall_2$-formulas with unions of chains) have been recovered for models defined over a fixed finite MTL-algebra~\cite{BadiaCostaDellundeNoguera19}. This was the same setting as the (infinite) homomorphism preservation theorem by Dellunde and Vidal~\cite{DellundeVidal19} and given our expansion of that theorem to models defined over interpreting lattices it prompts whether something similar is possible for the other preservation theorems. In the finite setting less is known, the \L os--Tarski theorem and Lyndon positivity~\cite[Theorem 1.4,1.5]{Rossman08} fail for classical finite models which would suggest they also fail in the many-valued setting but this requires conformation. 

As referenced throughout, there is a general interest in the application of many-valued model theory to questions of computational complexity, a first approach can be found in~\cite{HorcikMoraschiniVidal17}, and classically the finite homomorphism preservation theorem is linked to the constraint satisfaction problem (CSP)~\cite{Rossman08}. Our main theorem and the ancillary discussion in section~\ref{ResiCon} cover many of the algebraic structures that VCSP problems consider, and we can naturally ask what the impact the finite protomorphism preservation theorem may have. The failure of residuated conjunction preservation in non-integral settings could be of particular interest for the feasibility of VCSP investigations in those contexts. 

To close we comment on two more areas of future study in a little more detail. First is the status of the residuated  conjunction for non-integral residuated. We've already seen the that residuated conjunction is not preserved by homomorphisms without integrality. However, there is a natural class of maps between structures that do preserve residuated conjunction even in non-integral settings - monomorphisms. Recall from definition ~\ref{othermorph} that a monomorphism is a homomorphism $(f,g)$ that preserves the order of interpreted relation symbols, i.e. $f(R^M(\bar{m}))\leq R^N(g(\bar{m}))$.

At present, we are only able to show the viability of preservation of $e.p$-sentences by monomorphisms for $\exists$-\textit{witnessed} models, that is models where for any $\CP$-formula $\phi(\bar{x}):=\exists x\psi(x,\bar{x})$ and any $\bar{m}\in M$ there is some $m\in M$ such that $||\exists x\psi(x,\bar{m})||^M=||\psi(m,\bar{m})||$.
\begin{lemma}
   Let $K$ be the class of integral residuated interpreting lattices. Let $\CP$ be a predicate language, $\psi$ and $e.p$-sentence. The models of $\psi$ are closed under monomorphisms.
\end{lemma}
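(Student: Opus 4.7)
The plan is to follow the same template as the proof that $e.p$-sentences are preserved under protomorphisms over integral residuated interpreting lattices, but with two adjustments: use the monomorphism inequality $f(R^M(\bar m)) \leq R^N(g(\bar m))$ in place of the protomorphism implication, and appeal to $\exists$-witnessing to materialize honest witnesses from suprema at the existential layer. Concretely, suppose $\psi = \biglor_{i=1}^{n}\theta_i$ with each $\theta_i = \exists \bar{x}\bigland_{j=1}^{n_1}\bigodot_{k=1}^{n_2} R_{j,k}(\bar{x}_{j,k})$, that $(A,F,M)\models \psi$, and that $(f,g)\colon (A,F,M)\to (B,G,N)$ is a monomorphism with $A,B$ integral residuated and $F=\{1_A\}$, $G=\{1_B\}$.

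First I would reduce to the atomic level in $M$. Primeness of $F$ gives some $i$ with $\|\theta_i\|^M=1_A$. By the $\exists$-witnessed assumption applied inductively to each $\exists x_\ell$ in $\theta_i$, one extracts a tuple $\bar m\in M$ with $\|\bigland_j\bigodot_k R_{j,k}(\bar x_{j,k})\|^M_{v[\bar x\mapsto \bar m]}=1_A$. Applying the filter/lattice conditions of Definition~\ref{def:interplat} for $\land$ and Lemma~\ref{resiconjmetalogic} for the residuated conjunction then forces every atomic $R_{j,k}^M(\bar m_{j,k})$ to equal $1_A$.

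Next I would push this data across the monomorphism. Because $f$ is an $\CL$-homomorphism and $1$ is an algebraic constant of the residuated signature, $f(1_A)=1_B$; combining with the monomorphism inequality gives $1_B = f(R_{j,k}^M(\bar m_{j,k})) \leq R_{j,k}^N(g(\bar m_{j,k}))$, and integrality of $B$ forces equality, so each $R_{j,k}^N(g(\bar m_{j,k}))=1_B$. Reassembling the quantifier-free part in $N$ via the other direction of Lemma~\ref{resiconjmetalogic} shows that $\bigland_j\bigodot_k R_{j,k}(\bar x_{j,k})$ evaluates to $1_B$ at $v[\bar x\mapsto g(\bar m)]$. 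Hence $\|\theta_i\|^N \geq 1_B$, so $\|\theta_i\|^N=1_B$, and $(B,G,N)\models \theta_i\leq \psi$.

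The main obstacle, and the reason the witnessed hypothesis is unavoidable in this proof, is the step that turns $\|\exists x\,\varphi(x)\|^M=1_A$ into an explicit $m\in M$ realizing $1_A$. A monomorphism only controls the \emph{values} of atomic formulas pointwise, so without an actual witness in the source model there is no single point of $M$ whose image in $N$ can be used to bound a supremum; this is exactly where the protomorphism argument (which used truth, not value) was essentially free. Once witnesses are available, integrality and the $\CL$-homomorphism preservation of $1$ do the rest of the work mechanically.
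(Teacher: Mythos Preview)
Your proof is correct, but the paper takes a different and somewhat more general route. Rather than decomposing $\psi$ all the way down to atomic truth values (using primeness of $F$, $\exists$-witnessing, the filter condition for $\land$, and Lemma~\ref{resiconjmetalogic} for $\&$) and then reassembling in $N$, the paper proves directly by structural induction on e.p-formulas that $f(\|\phi(\bar m)\|^M)\leq \|\phi(g(\bar m))\|^N$ for every e.p-formula $\phi$: the atomic case is the monomorphism inequality itself, the connectives $\lor,\land,\&$ go through by their monotonicity together with $f$ being an $\CL$-homomorphism, and the $\exists$ case uses a single appeal to $\exists$-witnessing to pick $m$ with $\|\exists x\chi(x,\bar m)\|^M=\|\chi(m,\bar m)\|^M$ and then the induction hypothesis. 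The conclusion follows from $1_B=f(1_A)\leq f(\|\psi\|^M)\leq\|\psi\|^N$. Your approach has the virtue of reusing the protomorphism template from Section~\ref{EPSent} almost verbatim; the paper's approach buys a stronger intermediate statement (a value-level inequality for all e.p-formulas, not merely truth of sentences) and, notably, never actually invokes integrality in the argument---consistent with the remark just before the lemma that monomorphisms preserve $\&$ ``even in non-integral settings''. Your detour through Lemma~\ref{resiconjmetalogic} to force each atomic value equal to $1_A$, and the appeal to integrality of $B$ to turn $R^N(g(\bar m))\geq 1_B$ into an equality, are precisely where your argument is tied to the integral hypothesis; the paper's monotonicity-only induction sidesteps this.
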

\begin{proof}
    Suppose $(A,M)\models\psi$ and $(f,g)\colon (A,M)\rightarrow(B,N)$ a monomorphism. We claim that $f(||\psi||^M)\leq ||\psi||^N$, in that case $(A,M)\models\psi$ implies $1_A\leq ||\psi||^M$ and so $1_B\leq f(||\psi||^M)\leq ||\psi||^N$ and $(B,N)\models\psi$ as required. For the claim we prove by induction that for any $e.p$-formula $\phi(\bar{x})$ and $\bar{m}\in M$ $f(||\phi(\bar{m})||^M)\leq ||\phi(g(\bar{m}))||^N$.

    The base case where $\phi$ is atomic is simply the definition of monomorphism. The inductive step for $\lor$,$\land$ and $\&$ all follow by their monotonicity in the algebra, e.g. with $\&$ we have by monotonicity that $\forall a,b,c,d\in A$ that $a\leq b$ and $c\leq d$ imply $a\& c\leq b\& c\leq b\& d$ and so assuming that $f(||\psi(\bar{m})||^M\leq ||\psi(g(\bar{m}))||^N$ $f(||\lambda(\bar{m})||^M\leq ||\psi(g(\bar{m})||^N$ we obtain $f(||\psi(\bar{m}) \& ||\lambda(\bar{m})||^M)$ = $f(||\psi(\bar{m})||^M)\& f(||\lambda(\bar{m})||^M)\leq ||\psi(g(\bar{m}))||^N\&||\psi(g(\bar{m})||^N=||\psi\&\lambda(g(\bar{m}))||^N$. This just leaves the case for $\exists$. Considering $\exists x\psi(x,\bar{x})$ we letting $\bar{m}$ be arbitrary and apply $\exists$-witnessed to find $m\in M$ such that $||\exists x\psi(x,\bar{m})||^M=||\psi(m,\bar{m})||^M$. Then applying $f$ and the inductive hypothesis we obtain:
    \[ f(||\exists x\psi(x,\bar{m})||^M)=f(||\psi(m,\bar{m})||^M)\leq ||\psi(g(m),g(\bar{m})||^N\leq ||\exists x\psi(x,g(\bar{m})||^N.\]
\end{proof}

Unfortunately, when it comes to attempting a proof of the full monomorphism preservation theorem (infinite or finite) our easy strategy of applying Rossman's result no longer works, there is no way to define a classical counterpart to a structure on the two element Boolean algebra that has mutual monomorphism available with the original structure. We believe that the most viable strategy would require a full recovery of Rossman's original classical proof with appropriate generalisations to the many-valued context of the concepts contained within. Alternatively, there has recently been work investigating the classical homomorphism preservation theorem alongside other kinds of preservation theorems via categorical methods~\cite{AbramskyReggio23} which could potentially be adapted and applied to many-valued models. 

Another relationship we want to understand better is the one between the existence of a protomorphism and homomorphism between two given structures and whether there are significant consequences for when one is only able to define a protomorphism between structures. Recall that any protomorphism $g\colon (A,F,M)\rightarrow(B,G,N)$ is a homomorphism when paired with \textit{any} $\CL$-algebra homomorphism $f\colon A\rightarrow B$ and accordingly the only difference between the $\rightarrow_p$ and $\rightarrow$ relations is the existence of \textit{some} homomorphism $f\colon A\rightarrow B$. Any information of the map $f$ seems to have no direct consequence on the structures themselves, its action is irrelevant to the comparison in atomic formulas the map $g$ enables making its presence seem unnecessary. At the same time, our proof for preservation between structures defined over a class of algebras really does only go through for protomorphisms. This prompts the question of whether there really is some genuine difference (at the logical level) between the two relations or whether at the weakest level of homomorphism we are essentially free to ignore our algebra component and solely work with protomorphisms.

We can say a small amount about the most straightforward case. We could bluntly require that for our class of algebras $K$ that for any  (non-trivial) $A,B\in K$ there exists a homomorphishm $f\colon A\rightarrow B$. In this case the two notions collapse. This situation has had some prior attention, namely for quasivarieties of finite type with at least one finite non-trivial member the condition is equivalent to a property called \textit{passive structural completeness} (PSC)~\cite[Theorem 7.6]{MoraschiniRafteryWannenburg20}. The restriction to quasivarieties of finite type is not major, all the classes of algebras we are interested in are subclasses of a given variety or quasivariety. In such a situation we can obtain the following corollary to our main theorem.
\begin{corollary}
    Let $K$ be a class of interpreting lattices contained within a PSC quasivariety containing a finite non-trivial member. Then the relations of $\rightarrow_p$ and $\rightarrow$ for models over $K$ coincide and in particular any $\CP$-sentence $\phi$ is is preserved under $\rightarrow_p$ over $K$ iff $\phi$ is preserved under $\rightarrow$ over $K$.
\end{corollary}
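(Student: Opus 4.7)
The plan is to reduce the corollary directly to the characterization of passive structural completeness (PSC) cited in the paragraph preceding the corollary, namely that PSC for a quasivariety of finite type containing a finite non-trivial member is equivalent to the existence of an $\CL$-algebra homomorphism between any two non-trivial members. The whole point is to leverage the observation already made in the preceding discussion: a protomorphism $g\colon (A,F,M)\rightarrow (B,G,N)$ becomes a homomorphism as soon as it is paired with \emph{any} $\CL$-homomorphism $f\colon A\rightarrow B$, so the gap between $\rightarrow_p$ and $\rightarrow$ is measured precisely by the existence of an algebra map.

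First I would establish the easy inclusion $\rightarrow\ \subseteq\ \rightarrow_p$: if $(f,g)\colon (A,F,M)\rightarrow (B,G,N)$ is a homomorphism, then by Definition~\ref{othermorph} its second component $g$ is already a protomorphism, so $(A,F,M)\rightarrow_p(B,G,N)$. This direction needs nothing beyond the definitions. For the non-trivial inclusion, I would fix $(A,F,M),(B,G,N)$ with $(A,F,M)\rightarrow_p(B,G,N)$ via $g\colon M\rightarrow N$. Since $K$ is contained in a PSC quasivariety of finite type with a finite non-trivial member, the cited characterization~\cite[Theorem 7.6]{MoraschiniRafteryWannenburg20} provides an $\CL$-algebra homomorphism $f\colon A\rightarrow B$. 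By the remark recalled in the paragraph preceding the corollary, $(f,g)$ is then a homomorphism in the sense of Definition~\ref{othermorph}, witnessing $(A,F,M)\rightarrow (B,G,N)$. Thus the two relations agree on $K$.

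The preservation equivalence then follows immediately: if $\phi$ is preserved under $\rightarrow_p$ and $(A,F,M)\rightarrow (B,G,N)$, then $(A,F,M)\rightarrow_p (B,G,N)$ by the easy inclusion, so $(B,G,N)\models\phi$; the converse uses the non-trivial inclusion in exactly the same way. Equivalently one can simply observe $Mod^K(\phi)$ is closed under $\rightarrow_p$ iff it is closed under $\rightarrow$ since these two subsets of $K\times K$ are equal.

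The only real obstacle I anticipate is the handling of trivial members of $K$, since the PSC characterization is phrased for \emph{non-trivial} algebras (and indeed a trivial interpreting lattice may fail to receive homomorphisms from every $A\in K$). I expect this to be cosmetic: either one tacitly restricts the statement to non-trivial interpreting lattices (matching the parenthetical comment in the paragraph preceding the corollary), or one notes that a trivial interpreting lattice forces the filter to be a singleton and the models over it to be degenerate in a way that makes preservation vacuous. Aside from this bookkeeping, the corollary is essentially a one-line consequence of the preceding discussion together with the Moraschini--Raftery--Wannenburg characterization of PSC.
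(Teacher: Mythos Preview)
Your proposal is correct and follows exactly the approach the paper intends: the corollary is presented without proof, as an immediate consequence of the preceding discussion that PSC (via~\cite[Theorem 7.6]{MoraschiniRafteryWannenburg20}) supplies an $\CL$-homomorphism $f\colon A\to B$ between any two non-trivial members, which then upgrades any protomorphism $g$ to a homomorphism $(f,g)$. Your flagging of the trivial-algebra edge case is also apt and matches the parenthetical ``(non-trivial)'' qualifier in the paper's setup.
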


The class of FSI Heyting algebras provide an example of this type. The variety of Heyting algebras is PSC~\cite[Example 7.9]{MoraschiniRafteryWannenburg20} and FSI Heyting algebras are interpreting lattices when taking $F=\{1\}$.

\section*{Acknowledgments}

We were partially supported by the Australian Research Council grant DE220100544.


\printbibliography[heading=bibintoc]

@article{AbramskyReggio23,
    title = {Arboreal categories: An axiomatic theory of resources},
    author = {S. Abramsky and L. Reggio},
    journal = {Logical Methods in Computer Science},
    volume = {19},
    number = {3},
    pages = {14:1-14:36},
    year = {2023},
}

@article{AjtaiGurevich87,
    author = {M. Ajtai and Y. Gurevich},
    title = {Monotone versus positive},
    journal = {J. ACM},
    year = {1987},
    volume = {34},
    pages = {1004-1015},
}

@article{BadiaCostaDellundeNoguera19,
    author = {G. Badia and V. Costa and P. Dellunde and C. Noguera},
    title = {Syntactic characterizations of classes of first-order structures in mathematical fuzzy logic},
    journal = {Soft Computing},
    year = {2019},
    volume = {23},
    pages = {2177-2186},
}

@article{BadiaNoguera22,
  author={G. Badia and C. Noguera},
  journal={IEEE Transactions on Fuzzy Systems}, 
  title={A 0-1 Law in Mathematical Fuzzy Logic}, 
  year={2022},
  volume={30},
  number={9},
  pages={3833-3840},
}

@article{BistarelliMontanariRossi97,
    author = {S. Bistareli and U. Montanari and F. Rossi},
    title = {Semiring-based constraint satisfaction and optimization},
    journal = {Journal of the ACM},
    volume = {44},
    number = {2},
    pages = {201--236},
    year = {1997},
}

@InProceedings{BiziereGradelNaaf23,
  author =	{C. Bizi\`{e}re and E. Gr\"{a}del and M. Naaf},
  title =	{Locality Theorems in Semiring Semantics},
  booktitle =	{48th International Symposium on Mathematical Foundations of Computer Science (MFCS 2023)},
  pages =	{20:1--20:15},
  series =	{Leibniz International Proceedings in Informatics (LIPIcs)},
  year =	{2023},
  volume =	{272},
  editor =	{Leroux, J\'{e}r\^{o}me and Lombardy, Sylvain and Peleg, David},
  publisher =	{Schloss Dagstuhl -- Leibniz-Zentrum f{\"u}r Informatik},
  address =	{Dagstuhl, Germany},
}

@misc{BrinkeGradelMrkonjic23,
      title={Ehrenfeucht-Fra\"iss\'e Games in Semiring Semantics}, 
      author={S. Brinke and E. Grädel and L. Mrkonjić},
      year={2023},
      eprint={2308.04910},
      archivePrefix={arXiv},
      primaryClass={cs.LO},
      url={https://arxiv.org/abs/2308.04910}, 
}

@article{ChandraMerlin77,
    author = {A. Chandra and P. Merlin},
    title = {Optimal implementation of conjunctive queries in relational databases},
    journal = {Proceedings of the ninth annual ACM sympsium on Theory of computing},
    year = {1977},
    pages = {77-90},
}

@book{CintulaNoguera21,
    author = {P. Cintula and C. Noguera},
    title = {Logic and Implication},
    publisher = {Springer},
    year = {2021},
}

@article{Dellunde12,
    title = {Preserving mappings in fuzzy predicate logics},
    journal = {Journal of Logic and Computation},
    volume = {22},
    number = {6},
    pages = {1367-1389},
    author = {P. Dellunde},
    year = {2012},
}

@article{DellundeGarciaNoguera16,
    author = {P. Dellunde A. Garc\'{i}a-Cerdaña and C. Noguera},
    title = {L\"{o}wenheim-Skolem theorems for non-classical first-order algebraizable logics},
    journal = {Logic Journal of the IGPL},
    year = {2016},
    volume = {24},
    number = {3},
    pages = {321-345},  
}

@article{DellundeGarciaNoguera18,
    title = {Back-and-forth systems for fuzzy first order models},
    journal = {Fuzzy Sets and Systems},
    volume = {345},
    number = {},
    pages = {},
    author = {P. Dellunde and A. Garc\'{i}a-Cerdaña and C. Noguera},
    year = {2018},
}

@article{DellundeVidal19,
    title = {Truth-preservation under fuzzy pp-formulas},
    journal = {International Journal of Uncertaintiy, Fuzzyness and Knowledge-Based Systems},
    volume = {27},
    number = {},
    pages = {89--105},
    author = {P. Dellunde and A. Vidal},
    year = {2019},
}

@book{EbbinghausFlum95,
    author = {H.D. Ebbinghaus and J. Flum},
    title = {Finite Model Theory},
    publisher = {Springer},
    year = {1995},
}

@inproceedings{GradelHelalNaafWilke22,
    author = {Gr\"{a}del, Erich and Helal, Hayyan and Naaf, Matthias and Wilke, Richard},
    title = {Zero-One Laws and Almost Sure Valuations of First-Order Logic in Semiring Semantics},
    year = {2022},
    isbn = {9781450393515},
    publisher = {Association for Computing Machinery},
    address = {New York, NY, USA},
    url = {https://doi.org/10.1145/3531130.3533358},
    doi = {10.1145/3531130.3533358},
    booktitle = {Proceedings of the 37th Annual ACM/IEEE Symposium on Logic in Computer Science},
    articleno = {41},
    numpages = {12},
    keywords = {semiring semantics, first-order logic, 0-1 laws},
    location = {Haifa, Israel},
    series = {LICS '22}
}

@article{Green11,
    author = {T.J. Green},
    title = {Containment of conjunctive queries on annotated relations},
    journal = {Theory of Computing Systems},
    year = {2011},
    volume = {49},
    pages = {429-459},
}

@inproceedings{GreenTannen17,
    author = {Green, Todd J. and Tannen, Val},
    title = {The Semiring Framework for Database Provenance},
    year = {2017},
    publisher = {Association for Computing Machinery},
    address = {New York, NY, USA},
    url = {https://doi.org/10.1145/3034786.3056125},
    doi = {10.1145/3034786.3056125},
    booktitle = {Proceedings of the 36th ACM SIGMOD-SIGACT-SIGAI Symposium on Principles of Database Systems},
    pages = {93–99},
    series = {PODS '17}
}

@InProceedings{Gurevich84,
    author= {Y. Gurevich},
    editor= {B{\"o}rger, Egon and Oberschelp, Walter and Richter, Michael M. and Schinzel, Brigitta and Thomas, Wolfgang}, 
    title= {Toward logic tailored for computational complexity},
    booktitle= {Computation and Proof Theory},
    year= {1984},
    publisher= {Springer Berlin Heidelberg},
    address= {Berlin, Heidelberg},
    pages= {175--216},
}

@book{Handbook,
    editor = {P. Cintula and P. H\'{a}jek and C. Noguera},
    title = {Handbook of Mathematical Fuzzy Logic Vol 1},
    publisher = {College Publications},
    year = {2011},
}

@book{Hodges97,
    author = {W. Hodges},
    title = {A shorter model theory},
    publisher = {Cambridge University Press},
    year = {1997},
}

@article{HorcikMoraschiniVidal17,
    title = {An Algebraic Apporach to Valued Constraint Satisifaction},
    journal = {26th EACSL Annual Conference on Computer Science Logic},
    volume = {82},
    number = {},
    pages = {},
    author = {R. Hor\u{c}\'ik and T. Moraschini and A. Vidal},
    year = {2017},
}

@article{Lyndon59,
    author = {R. Lyndon},
    title = {Properties preserved under homomorphism},
    journal = {Pacific J. Math},
    year = {1959},
    volume = {9},
    pages = {129-142},
}

@article{MoraschiniRafteryWannenburg20,
    title = {Single generated quasivarities and residuated structures},
    journal = {Mathematical Logic Quarterly},
    volume = {66},
    number = {},
    pages = {150-172},
    author = {T. Moraschini and J. Raftery and J.J. Wannenburg},
    year = {2020},
}

@article{Noguera:Thesis,
    author = {C. Noguera},
    title = {Algebraic study of axiomatic extensions of triangular norm based fuzzy logics},
    journal = {Monographs of the Artificial Intelligence Research Institute},
    volume = {27},
    year = {2007},
}

@article{NolaGerla86,
    author = {A. Di Nola and G. Gerla},
    title = {Fuzzy models of first-order languages},
    journal = {Zeitshrift f\"{u}r Mathematische Logik und Grundlagen der Mathematik},
    volume = {32},
    pages = {19--24},
    year = {1986},
}

@article{Tait59,
    author = {W. Tait} ,
    title = {A counterexample to a conjecture of Scott and Suppes},
    journal = {J. Symbolic Logic},
    year = {1959},
    volume = {24},
    pages = {15-16},
}

@article{Rossman08,
    title = {Homomorphism preservation theorems},
    journal = {Journal of the ACM},
    volume = {55},
    number = {3},
    pages = {},
    author = {B. Rossman},
    year = {2008},
}

\end{document}